\newtheorem{corollary}{Corollary}[section]
\newtheorem{lemma}[corollary]{Lemma}
\newtheorem{proposition}[corollary]{Proposition}
\newtheorem{remark}[corollary]{Remark}
\newtheorem{theorem}[corollary]{Theorem}
\newtheorem{definition}[corollary]{Definition}
\newtheorem{example}[corollary]{Example}
\title{Stability estimates for phase retrieval from discrete Gabor measurements}
\author{Rima Alaifari and Matthias Wellershoff}
\begin{document}
	
	%------------------------------------------- Title --------------------------------------------
	\maketitle
	
	%------------------------------------------ Abstract ------------------------------------------
	\begin{abstract}
		\noindent
		Phase retrieval refers to the problem of recovering some signal (which is often modelled as
		an element of a Hilbert space) from phaseless measurements. It has been shown that in the
		deterministic setting phase retrieval from frame coefficients is always unstable in
		infinite-dimensional Hilbert spaces \cite{Cahill16} and possibly severely ill-conditioned
		in finite-dimensional Hilbert spaces \cite{Cahill16}.
		
		Recently, it has also been shown that phase retrieval from measurements induced by the
		Gabor transform with Gaussian window function is stable under a more relaxed
		\emph{semi-global} phase recovery regime based on \emph{atoll functions}
		\cite{Alaifari18a}.
		
		In finite dimensions, we present first evidence that this semi-global reconstruction regime
		allows one to do phase retrieval from measurements of bandlimited signals induced by the
		discrete Gabor transform in such a way that the corresponding stability constant only
		scales like a low order polynomial in the space dimension. To this end, we utilise
		reconstruction formulae which have become common tools in recent years
		\cite{Bojarovska15,Eldar15,Li17,Nawab83}.
	\end{abstract}

	\textbf{Keywords} Phase retrieval, Gabor frames, Time-frequency analysis
	
	\vspace{5pt}
	\textbf{Mathematics Subject Classification} 42C15, 42A38, 94A12, 65T50
	
	%------------------------------------------ Content -------------------------------------------

	\section{Introduction}\label{sec:introduction}
	Phase retrieval generally alludes to the non-linear inverse problem of recovering some signal
	(which in this paper will be modelled by $x \in \mathbb{C}^L$) from phaseless measurements.
	Some of its more well-known applications include ptychography for coherent diffraction imaging
	\cite{Humphry12,Marchesini16,Rodenburg08,Zheng13} and audio processing
	\cite{Flanagan66,Laroche99,Proakis93}. It has been shown that the phase retrieval problem for
	frames in finite-dimensional Hilbert spaces \cite{Cahill16} and a forteriori in
	finite-dimensional reflexive Banach spaces \cite{Alaifari17} is always stable, which elicits
	the question: Why are we concerned with stability estimates for phase retrieval from
	discrete Gabor measurements at all? The reason is that phase retrieval for frames in
	infinite-dimensional spaces is always unstable \cite{Cahill16, Alaifari17} and in addition one
	can construct sequences of finite-dimensional subspaces of infinite-dimensional Hilbert spaces
	along with frames for which the stability constant of phase retrieval increases exponentially
	in the dimension of the constructed subspaces \cite{Cahill16}. Recent research
	\cite{Alaifari18a} into the infinite-dimensional phase retrieval problem has however led us
	to believe that the instability of phase retrieval is not an insurmountable obstacle to
	reconstruction. It was shown that stability can be restored for examples that exhibit a
	disconnectedness in the measurements by only reconstructing the phase semi-globally or in an
	\emph{atoll sense}. Furthermore, it was shown in \cite{Grohs17} that such disconnectedness in
	the measurements is the only source of instabilities for phase retrieval.
	
	A simple example of
	instability can be obtained by considering the Gaussian functions $g(t) :=
	\mathrm{e}^{-\pi t^2}$ in conjunction with the signals
	\[
		f_\lambda^+(t) := g(t-\lambda) + g(t+\lambda) \qquad \mbox{and} \qquad f_\lambda^-(t) :=
		g(t-\lambda) - g(t+\lambda)
	\]
	depicted in figure \ref{fig:mostprominentexample}. When $\lambda$ increases, the Gaussian bumps
	in the signals $f_\lambda^\pm$ start to move further apart effectively generating what we call
	a \emph{time gap} whose length depends linearly on $\lambda$. It can be shown, see
	\cite{Alaifari18b}, that the measurements generated by the continuous Gabor transform with
	Gaussian window of the signals $f_\lambda^\pm$ have distance on the order of
	$\mathrm{e}^{-\lambda^2}$ in the standard Sobolev space $W^{1,2}(\mathbb{R}^2)$ and that one
	can therefore not stably retrieve $f_\lambda^\pm$ from continuous Gabor transform measurements.
	Similar phenomena can be observed for the discrete setting considered in this paper and we do
	therefore propose a similar paradigm as in \cite{Alaifari18a} and try to recover signals in a
	semi-global fashion that is not common in the phase retrieval literature up to this point.
	Note that in audio processing, it is natural to consider (audio) signals up to semi-global
	phase as human listeners are not able to distinguish between two signals which differ by
	semi-global phase \cite{Alaifari18a}.
	
	\begin{figure}
		\centering
		\begin{tikzpicture}
			\draw[->] (-6.3,0) -- (6.3,0) node[above] {$t$};
			\draw[->] (0,-3.3) -- (0,3.3);
			\draw[scale=2,domain=-3:3,smooth,line width=1pt, dashed] plot ({\x},{
				1.5*exp(-3.14159*(\x-2)*(\x-2))
				- 1.5*exp(-3.14159*(\x+2)*(\x+2))
			});
			\draw[scale=2,domain=-3:3,smooth, line width = 1pt] plot ({\x},{
				1.5*exp(-3.14159*(\x-2)*(\x-2))
				+ 1.5*exp(-3.14159*(\x+2)*(\x+2))
			});
			\draw[help lines, dashed] (-4,3)--(4,3);
			\draw[help lines, dashed] (-4,3)--(-4,-3);
			\draw[help lines, dashed] (4,3)--(4,0);
			\draw[help lines, dashed] (-4,-3)--(0,-3);
			\node[above right] at (-4,0) {$-\lambda$};
			\node[above left] at (4,0) {$\lambda$};
			\node at (-5.3,2.3) {$f_\lambda^+(t)$};
			\node at (-5.3,-2.3) {$f_\lambda^-(t)$};
		\end{tikzpicture}
		\caption{A simple example for instability of phase retrieval with continuous
			Gabor measurements.}
		\label{fig:mostprominentexample}
	\end{figure}
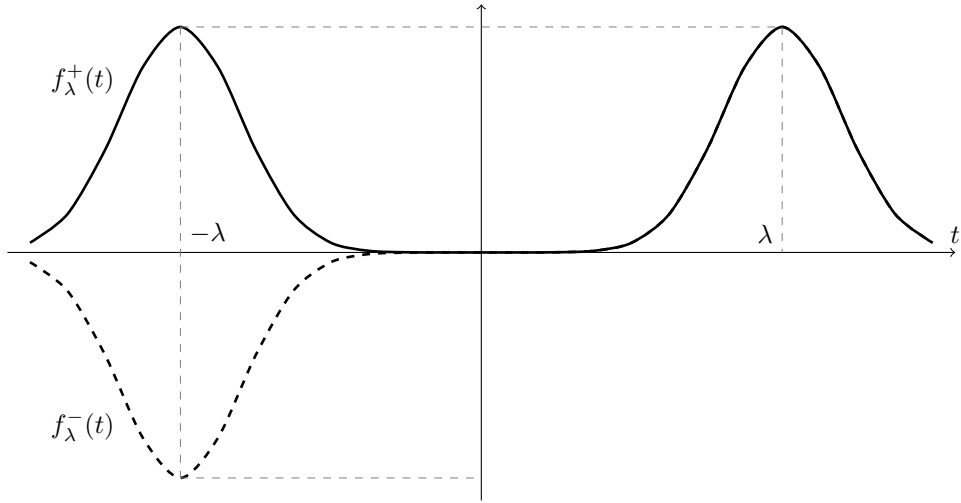
	
	One should note that in recent years a variety of stability result for phase retrieval have
	been proven. Some highlights of this research include:
	\begin{enumerate}
		\item[i.] The PhaseLift method \cite{Candes13, Candes12} which guarantees stable recovery
			from $\mathcal{O}(L)$ randomly chosen Gaussian measurements with high
			probability \cite{candes2014solving}.
		\item[ii.] The research on polarisation for phase retrieval \cite{Alexeev14, Bandeira14,
			pfander2019robust,Pfander15b} in which the authors supplement an existing measurement ensemble in order
			to obtain a phase retrieval problem that is efficiently and stably solvable.
		\item[iii.] Wirtinger flow and related methods \cite{Candes15, Sun17, Zhang17} which offer
			stability guarantees for sufficiently many randomly chosen Gaussian measurements.
		\item[iv.] The eigenvector-based angular synchronisation approach \cite{Iwen18} which
			relies on a certain weak form of invertibility of the phase retrieval problem to prove
			a stability result for deterministic measurement systems.
		\item[v.] The very recent work \cite{salanevich2019stability} in which the stability of 
			phase retrieval from (random) frames whose frame vectors are uniformly distributed on
			the unit sphere (but not necessarily independent) is considered.
	\end{enumerate}
	In some way or another, all of these results are based on different setups than ours: As
	opposed to the papers referenced in item i., iii.~and v.~we will not work with a probabilistic
	measurement system but with a deterministic one. We will also not supplement our measurement
	ensemble as is done in the results referenced in item ii.~and we will not work with the weak
	form of invertibility that is present in the paper referenced in item iv. In fact, we will
	consider the two well-known formulae \eqref{eq:autocorrelation} and \eqref{eq:ambiguity}
	presented in section \ref{sec:prerequisites} which are heavily used to develop methods for
	exact phase retrieval from Gabor measurements in the literature \cite{Eldar15,Li17,Nawab83}.
	We show that through further analysis of the formulae \eqref{eq:autocorrelation} and
	\eqref{eq:ambiguity}, one can derive stability results for some of those methods and therefore
	also for phase retrieval in general. Our stability results are designed for bandlimited signals
	and come with constants that scale in the square root of the space dimension at the cost of
	relaxing the notion of stability to resemble the one proposed in \cite{Alaifari18a}.
	
	\paragraph{Outline} In section \ref{sec:prerequisites}, we present the reader with the
	uniqueness results and the formulae on which our stability results hinge. In section
	\ref{sec:stab_ambiguity}, we utilise the ambiguity function relation \eqref{eq:ambiguity} in
	order to show that phase retrieval can be stably done for bandlimited signals based on the
	considerations in \cite{Bojarovska15, Pfander15a}. In section \ref{sec:autocorrelation}, we use
	the autocorrelation relation \eqref{eq:autocorrelation} in order to show that phase retrieval can
	be done stably for bandlimited signals utilising results from \cite{Eldar15,Li17}. As the
	proofs of our main results are a bit technical, they appear separately in section
	\ref{sec:proofs}.
	
	\section{Prerequisites}\label{sec:prerequisites}
	Throughout this paper, we fix the dimension $L \in \mathbb{N}$ and let $x,\phi \in
	\mathbb{C}^L$. We define the \emph{discrete Gabor transform (DGT)} of $x$ with window function
	$\phi$ to be
	\[
		\mathcal{V}_\phi[x](m,n) := \frac{1}{\sqrt{L}} \cdot \sum_{\ell=0}^{L-1}
			x(\ell) \overline{\phi(\ell-m)} \mathrm{e}^{-2\pi\mathrm{i} \frac{\ell n}{L}},
			\qquad m,n=0,\dots,L-1.
	\]
	Here and throughout this paper, the indexing is understood to be periodic. In particular, we
	use the convention $\phi(\ell) = \phi(\ell \mod L)$, for $\ell \in \mathbb{Z}$. A helpful way
	of looking at the DGT is to view it as a collection of windowed Fourier transforms. For this
	purpose, we denote $x_m(\ell) := x(\ell) \overline{\phi(\ell -m)}$, for $\ell,m \in
	\{0,\dots,L-1\}$, and obtain
	\[
		\mathcal{V}_\phi[x](m,n) = \mathcal{F}\left[ x_m \right](n), \qquad m,n = 0,\dots,L-1,
	\]
	where $\mathcal{F} : \mathbb{C}^L \to \mathbb{C}^L$ denotes the \emph{discrete Fourier
	transform (DFT)}
	\[
		\mathcal{F}[x](k) := \frac{1}{\sqrt{L}} \cdot \sum_{\ell=0}^{L-1} x(\ell)
			\mathrm{e}^{-2\pi\mathrm{i}\frac{\ell k}{L}}, \qquad k=0,\dots,L-1,
	\]
	with inverse
	\[
		\mathcal{F}^{-1}[x](\ell) = \frac{1}{\sqrt{L}} \cdot \sum_{k=0}^{L-1} x(k)
			\mathrm{e}^{2\pi\mathrm{i}\frac{k \ell}{L}}, \qquad \ell=0,\dots,L-1.
	\]
	We will frequently use the two-dimensional discrete Fourier transform which is the composition 
	of two DFTs as defined above.
	Additionally, we define the \emph{ambiguity function} of a signal $x$ via $\mathcal{A}[x] := 
	\mathcal{V}_x[x]$. We are interested in the recovery of signals $x \in \mathbb{C}^L$ from the
	measurements
	\[
		M_\phi[x](m,n) := \left\lvert \mathcal{V}_\phi[x](m,n) \right\rvert^2,
			\qquad m,n=0,\dots,L-1.
	\]
	It is immediately obvious that $x \in \mathbb{C}^L$ and any signal $\mathrm{e}^{\mathrm{i}
	\alpha} x$, with $\alpha \in \mathbb{R}$, yield the same measurements $M_\phi[
		\mathrm{e}^{\mathrm{i} \alpha} x
	] = M_\phi[x]$. Therefore, to have any chance of recovery, we
	will actually view $M_\phi$ as an operator defined on the quotient space $\mathbb{C}^L /
	\mathcal{S}^1$, where $\mathcal{S}^1$ denotes the unit circle. Under various assumptions, which
	we will lay out in the following, one can show that $M_\phi : \mathbb{C}^L / \mathcal{S}^1 \to
	\mathbb{R}_+^{L \times L}$ is an injective operator and that phase retrieval is therefore
	possible \emph{up to a global phase factor}. In addition, it was shown in \cite{Cahill16} that
	\[
		\inf_{\alpha \in \mathbb{R}} \lVert x - \mathrm{e}^{\mathrm{i} \alpha} y\rVert_2 \lesssim
			\lVert \left\lvert \mathcal{V}_\phi[x] \right\rvert - \left\lvert \mathcal{V}_\phi[y] \right\rvert \rVert_\mathrm{F},
	\]
	for all $x,y \in \mathbb{C}^L$, where $\lVert \cdot \rVert_\mathrm{F}$ denotes the Frobenius
	norm and the estimate depends on a constant which might increase exponentially in the space
	dimension $L$. Our phase retrieval problem is therefore possibly ill-conditioned.
	
	As mentioned before, the number of known uniqueness results has seen a stark rise in the past
	few years. In the following, we want to mention those that inspired our stability estimates.
	Let us start by remarking that almost all uniqueness results can be traced back to two
	consequential formulae which are well-known in the literature. The first of these relates the
	Gabor measurements to the autocorrelation of $x_m$. In what follows, time-reversal of a signal
	will be denoted by $x^\#(\ell) = \overline{x(-\ell)}$.
	\begin{lemma}\label{lem:autocorrelation}
		For any $x \in \mathbb{C}^L$,
		\begin{equation}\label{eq:autocorrelation}
			\mathcal{F}^{-1}\left[ M_\phi[x](m,\cdot) \right](n)
			= \frac{1}{\sqrt{L}} \cdot \left( x_m \ast x_m^\# \right)(n), \qquad m,n=0,\dots,L-1.
		\end{equation}
	\end{lemma}
	\begin{proof}\renewcommand{\qedsymbol}{}
		See appendix \ref{app:proofs_conseq_formulae}.
	\end{proof}
	The right-hand side in the above result is the aforementioned autocorrelation of $x_m$:
	\[
		\left( x_m \ast x_m^\# \right)(n) = \sum_{\ell=0}^{L-1} x(\ell)\overline{x(\ell-n)}
			\phi(\ell-n-m) \overline{\phi(\ell-m)}, \qquad m,n = 0,\dots,L-1.
	\]
	The second of these formulae relates the Gabor measurements to the ambiguity function of $x$
	and the ambiguity function of $\phi$.
	\begin{lemma}\label{lem:ambguity}
		For any $x \in \mathbb{C}^L$, the following holds:
		\begin{equation}\label{eq:ambiguity}
			\mathcal{F}\left[ M_\phi[x]\right](m,n)
			= \mathcal{A}[x](-n,m) \overline{\mathcal{A}[\phi](-n,m)}, 
			\qquad \mbox{for } m,n = 0,\dots,L-1.
		\end{equation}
	\end{lemma}
	\begin{proof}\renewcommand{\qedsymbol}{}
		See appendix \ref{app:proofs_conseq_formulae}.
	\end{proof}

	Next, we will consider the uniqueness results from \cite{Bojarovska15, Pfander15a} which are
	based on equation \eqref{eq:ambiguity}.
	
	\begin{corollary}[Theorem 2.2 in \cite{Bojarovska15}, p.~547]\label{cor:full_ambiguity_uniqueness}
		Suppose that $\phi$ satisfies
		\[
			\mathcal{A}[\phi](m,n) \neq 0, \qquad \mbox{for } m,n=0,\dots,L-1.
		\]
		Then, $x$ is uniquely determined by the measurements $M_\phi[x]$ up to global phase.
	\end{corollary}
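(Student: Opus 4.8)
The plan is to read the ambiguity function of $x$ off the measurements via the relation \eqref{eq:ambiguity}, and then to show that the ambiguity function alone determines $x$ up to a global phase. The first step is to observe that the hypothesis on $\phi$ is exactly what is needed to divide by the window-dependent factor on the right-hand side of \eqref{eq:ambiguity}. Indeed, $(\Pi_{(-n,m)}[\phi],\phi) = \overline{(\phi,\Pi_{(-n,m)}[\phi])}$, and since the parameters $(-n,m)$ range over all of $\set{0,\dots,L-1}^2$ as $(m,n)$ do (recall the periodic indexing convention), the assumption $(\phi,\Pi_{(m,n)}[\phi]) \neq 0$ for all $m,n$ is equivalent to the non-vanishing of this factor for all $m,n$. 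Hence $(x,\Pi_{(-n,m)}[x]) = \CF[M_\phi[x]](m,n) / (\Pi_{(-n,m)}[\phi],\phi)$ is recovered from $M_\phi[x]$ alone, for every $m,n$.

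Next I would unravel the ambiguity function into lagged products. A direct computation from the definition of $\Pi$ gives $(x,\Pi_{(-n,m)}[x]) = \frac{1}{\sqrt{L}}\sum_{\ell=0}^{L-1} x(\ell)\overline{x(\ell+n)}\,\ee^{-2\pi\ii\ell m/L}$, so that for each fixed lag $n$ the values in the variable $m$ are precisely the DFT of the product $\ell \mapsto x(\ell)\overline{x(\ell+n)}$. Applying $\CF^{-1}$ in the variable $m$ therefore yields all lagged products $x(\ell)\overline{x(\ell+n)}$, for $\ell,n=0,\dots,L-1$.

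It then remains to reconstruct $x$ from these products. The zero lag $n=0$ gives $\abs{x(\ell)}^2$ for every $\ell$. If $x=0$ there is nothing to prove, so I fix some $\ell_0$ with $x(\ell_0)\neq 0$; multiplying by a suitable unimodular constant (the one global phase we are permitted) we may assume $x(\ell_0)>0$, so that $x(\ell_0) = \sqrt{\abs{x(\ell_0)}^2}$ is a known positive real. For each $\ell$, choosing the lag $n=\ell_0-\ell$ produces the known quantity $x(\ell)\overline{x(\ell_0)} = x(\ell)\,x(\ell_0)$, from which $x(\ell)$ is determined. Thus every coordinate is pinned down, and $x$ is unique up to the single global phase fixed at $\ell_0$.

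I do not expect a substantive obstacle. Because we have access to the \emph{full} ambiguity function, every coordinate $\ell$ is linked to the anchor $\ell_0$ through a single lag, so none of the connectivity or phase-propagation issues that plague reconstructions from partial ambiguity data arise here. The only points demanding a little care are the index bookkeeping that converts the hypothesis on $\phi$ into non-vanishing of the correct factor $(\Pi_{(-n,m)}[\phi],\phi)$, and the verification that the global phase may legitimately be normalised at $\ell_0$.
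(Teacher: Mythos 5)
Your proof is correct. Note that the paper does not prove this corollary itself---it is quoted from \cite{Bojarovska15}---so the natural comparison is with the paper's proof of corollary \ref{cor:bojarovska_all}, which contains the same mechanism in greater generality. You and the paper agree on the first two steps: divide $\CF[M_\phi[x]](m,n)$ by the nonvanishing factor $(\Pi_{(-n,m)}[\phi],\phi) = \overline{(\phi,\Pi_{(-n,m)}[\phi])}$ (your index bookkeeping via the periodic convention is the right way to match the hypothesis to this factor) to recover the full ambiguity function of $x$, then invert the DFT to obtain all lagged products $x(\ell)\overline{x(\ell+n)}$. Where you genuinely diverge is the final step: the paper reconstructs by \emph{phase propagation}, walking from one nonzero entry to the next using only the small time shifts $m \leq \Delta_\mathrm{time}+1$, which is exactly why corollary \ref{cor:bojarovska_all} must bound the runs of consecutive zeros in $x$; you instead anchor every coordinate directly to one reference entry $\ell_0$ through the single lag $n = \ell_0 - \ell$, which is available here because \emph{all} lags are. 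For the present statement your route is the cleaner one: it is essentially the elementary observation that the rank-one matrix of products $x(\ell)\overline{x(\ell')}$ determines $x$ up to a unimodular constant, it requires no connectivity of the support, and it makes transparent why corollary \ref{cor:full_ambiguity_uniqueness} imposes no condition whatsoever on the signal. What the propagation route buys in exchange is robustness to missing lags: it survives when the window's ambiguity function is only known to be nonzero near small time shifts, and it is the skeleton on which the paper's quantitative results (theorems \ref{thm:ambig_one_island} and \ref{thm:fullstab_I}) are built, whereas your one-step anchoring has no analogue once the lag $\ell_0-\ell$ is unavailable.
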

	
	While this result is exceptionally nice in the sense that it does not impose any requirements
	on the signal, it is quite restrictive in its requirements on the window function $\phi$. For
	instance, windows $\phi$ with support length $\lvert \operatorname{supp} \phi \rvert$ smaller
	than $L/2$ will always have zero entries in their ambiguity function.
	
	\begin{corollary}[Theorem 2.4 in \cite{Bojarovska15}, p.~549]\label{cor:min_ambiguity_uniqueness}
		Let $x \in \mathbb{C}^L$ be nowhere-vanishing,
		i.e.~$\operatorname{supp} x = \{0,\dots,L-1\}$, and
		\[
			\mathcal{A}[\phi](m,n) \neq 0, \qquad \mbox{for }m=0,1,~n=0,\dots,L-1.
		\]
		Then, $x$ is uniquely determined by the measurements $M_\phi[x]$ up to global phase.
	\end{corollary}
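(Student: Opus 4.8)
The plan is to read off the two lowest-order ``diagonals'' of the ambiguity function of $x$ from the data via \eqref{eq:ambiguity}, and then to propagate phases along the signal using the nowhere-vanishing assumption. First I would apply the two-dimensional Fourier transform to the measurements $M_\phi[x]$; by Lemma \ref{lem:ambguity} this yields the pointwise product
\[
	\CF\left[ M_\phi[x] \right](m,n) = \left( x, \Pi_{(-n,m)}[x] \right) \cdot \left( \Pi_{(-n,m)}[\phi], \phi \right).
\]
The window $\phi$ is known, so the second factor $\left( \Pi_{(-n,m)}[\phi], \phi \right) = \overline{\left( \phi, \Pi_{(-n,m)}[\phi] \right)}$ is known, and the hypothesis guarantees that it is nonzero, for all $m$, whenever the time-shift index $-n$ equals $0$ or $1$ modulo $L$, i.e. for $n \in \set{0, L-1}$. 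Dividing out this factor at these two values of $n$ recovers the slices $\left( x, \Pi_{(0,m)}[x] \right)$ and $\left( x, \Pi_{(1,m)}[x] \right)$ of the ambiguity function of $x$, for every $m = 0,\dots,L-1$.

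Next I would unwind these slices. Since
\[
	\left( x, \Pi_{(a,m)}[x] \right) = \frac{1}{\sqrt{L}} \sum_{\ell=0}^{L-1} x(\ell) \overline{x(\ell-a)} \, \ee^{-2\pi\ii \frac{\ell m}{L}} = \CF\left[ \ell \mapsto x(\ell)\overline{x(\ell-a)} \right](m),
\]
an inverse DFT in the variable $m$ turns the $a=0$ slice into the squared magnitudes $\abs{x(\ell)}^2$ and the $a=1$ slice into the consecutive products $x(\ell)\overline{x(\ell-1)}$, for all $\ell = 0,\dots,L-1$.

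The concluding step is a phase-propagation argument, and this is exactly where the nowhere-vanishing hypothesis enters. Writing $x(\ell) = r_\ell \ee^{\ii \theta_\ell}$ with $r_\ell = \abs{x(\ell)} > 0$ (known from the $a=0$ slice), the recovered product $x(\ell)\overline{x(\ell-1)} = r_\ell r_{\ell-1} \, \ee^{\ii(\theta_\ell - \theta_{\ell-1})}$ has strictly positive modulus, so dividing by $r_\ell r_{\ell-1}$ yields each relative phase factor $\ee^{\ii(\theta_\ell - \theta_{\ell-1})}$. Choosing $\theta_0$ freely as the global phase and propagating these differences successively fixes $\theta_1,\dots,\theta_{L-1}$, and together with the known magnitudes this determines $x$ up to the single unimodular constant $\ee^{\ii\theta_0}$, as claimed.

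I expect the one genuinely delicate point to be the index bookkeeping in the first step: one must match the time-shift index $-n$ appearing in \eqref{eq:ambiguity} against the hypothesis range (shifts $0$ and $1$) so that precisely the $a=0$ and $a=1$ slices become accessible. Beyond that, the argument makes the role of each hypothesis transparent --- the two-line condition on $\phi$ is exactly what renders the $a=0,1$ slices recoverable, while nowhere-vanishing of $x$ is precisely what stops a zero entry $x(\ell)=0$ from annihilating the product $x(\ell)\overline{x(\ell-1)}$ and thereby severing the chain of phase differences. This is the obstruction that Corollary \ref{cor:full_ambiguity_uniqueness}, which accesses all time-shift slices, is able to sidestep without any assumption on $x$.
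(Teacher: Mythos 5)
Your argument is correct and is essentially the paper's own: the paper cites Corollary \ref{cor:min_ambiguity_uniqueness} from \cite{Bojarovska15} but proves the generalisation, Corollary \ref{cor:bojarovska_all}, by exactly your method --- divide the hypothesis' nonzero window-ambiguity factor out of \eqref{eq:ambiguity}, invert the DFT in the modulation variable to obtain $x(\ell)\overline{x(\ell-m)}$, and propagate phases from entry to entry --- and your statement is the special case $\Delta_\mathrm{time}=0$ of that proof. Your index bookkeeping (matching the shift $-n$ against $\set{0,1}$ via $n\in\set{0,L-1}$ under periodic indexing) and the role you assign to each hypothesis are both accurate.
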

	This result is in some sense orthogonal to corollary \ref{cor:full_ambiguity_uniqueness}: Its
	requirements on the window function are moderate while its requirements on the signal are
	rather restrictive. Of course, we might also infer a variety of results that are based on
	different trade-offs between restrictions on the window and restrictions on the signal.
	For this purpose, we introduce the parameter $\Delta \in \mathbb{N}_0$. It corresponds to the 
	maximum number of adjacent zeroes across which we may propagate phase in the reconstruction
	procedure used in the proof of the following corollary. Stated a bit more precisely: If $x$ is
	a signal of which we only know its measurements
	$M_\phi[x]$, then it follows from $\mathcal{A}[\phi](0,\cdot)$ being nowhere-vanishing (and the
	use of the ambiguity function relation) that we can reconstruct the magnitudes of $x$.
	Therefore, it suffices to propagate phase between the entries of $x$ to reconstruct $x$ up to 
	global phase. When we assume that $\mathcal{A}[\phi](m,\cdot)$ is nowhere-vanishing, for some 
	$m$, then we allow (according to the ambiguity function relation) the phase propagation from
	the entry with index $\ell$ to the entry with index $\ell + m$ (and to the entry with index
	$\ell - m$). Whether this allows us to reconstruct $x$ up to global phase depends on the set of 
	$m$ for which $\mathcal{A}[\phi](m,\cdot)$ is nowhere-vanishing and on the support set of $x$.
	This is the central idea on which the following corollary is built:
	\begin{corollary}\label{cor:bojarovska_all}
		Let $\Delta \in \mathbb{N}_0$ and let $x,y,\phi \in
		\mathbb{C}^L$ be such that $M_\phi[x] = M_\phi[y]$ and
		\[
			\mathcal{A}[\phi](m,n) \neq 0, \qquad \mbox{for } m \in \{0,\dots,\Delta+1\},~n\in\{0,\dots,L-1\}.
		\]
		Furthermore, let $G = (V,E)$ denote the graph with vertex set $V = \operatorname{supp} x$ and edge set 
		$E \subset V \times V$ such that 
		\[
			(\ell,k) \in E \Leftrightarrow \lvert \ell - k \rvert \in (0,\Delta+1] \cup [L-\Delta-1,L),
		\]
		i.e.~two vertices are connected if and only if they are at most $\Delta+1$ apart.
		If $\{V_k\}_{k=1}^K$ constitute the vertex sets of the connected components
		of $G$, then for each $k \in \{1,\dots,K\}$ there exists an $\alpha_k \in \mathbb{R}$ such
		that 
		\[
			x(\ell) = \mathrm{e}^{\mathrm{i} \alpha_k} y(\ell), \qquad \ell \in V_k.
		\]
	\end{corollary}
	\begin{proof}\renewcommand{\qedsymbol}{}
		See section \ref{sec:proofs}.
	\end{proof}

	\begin{remark}
		The corollary above is more general than corollaries \ref{cor:full_ambiguity_uniqueness}
		and \ref{cor:min_ambiguity_uniqueness}. Indeed, if $\Delta \geq \tfrac{L}{2}-1$, then $G$
		is connected. In fact, one can see from the definition of the edge set that $G$ is the
		complete graph on the vertex set $\operatorname{supp} x$. In particular, corollary
		\ref{cor:full_ambiguity_uniqueness} follows. If $\Delta = 0$ and $x$ is
		nowhere-vanishing, then $G$ is the circle graph on $L$ vertices and is thus connected. In
		this way, we recover corollary \ref{cor:min_ambiguity_uniqueness}.
	\end{remark}

	\vspace{5pt}
	Finally, we will work with a uniqueness result first proven in \cite{Eldar15} and later
	generalised in \cite{Li17} based mostly on equation \eqref{eq:autocorrelation}. Consider the
	following statement.
	\begin{corollary}[Theorem 1 in \cite{Eldar15}, p.~639]\label{cor:eldar_result}
		Let $n_0, \ell_\phi \in \{0,\dots,L-1\}$ be such that
		$\ell_\phi < L/2$ and suppose that $\ell_\phi-1$ and $L$ are coprime. If
		\[
			\operatorname{supp} \phi = \{n_0, \dots, n_0 + \ell_\phi\}
		\]
		and $\mathcal{F}[\lvert \phi \rvert^2]$ and $x$ are nowhere-vanishing, then $x$ is
		uniquely determined by the measurements $M_\phi[x]$ up to global phase.
	\end{corollary}
	The work in \cite{Li17} shows that one can also derive this result as part of a
	graph-theoretical formulation for phase retrieval.
	\begin{corollary}[Theorem 3.1 in \cite{Li17}, p.~373]\label{cor:li_result}
		Let $n_0, \ell_\phi \in \{0,\dots,L-1\}$ be such that
		$\ell_\phi < L/2$ and suppose that
		\[
			\operatorname{supp} \phi = \{n_0, \dots, n_0 + \ell_\phi\}
		\]
		and $\mathcal{F}[\lvert \phi \rvert^2]$ is nowhere-vanishing. Let the graph $G = (V,E)$
		defined by having the vertex set $V = \operatorname{supp} x$ and an edge between $\ell,
		k \in V$ if 
		\[
			\lvert \ell-k \rvert \in \{\ell_\phi, L-\ell_\phi\}
		\]
		be connected. Then, $x$ is uniquely determined by the measurements $M_\phi[x]$ up to global
		phase.
	\end{corollary}
	
	\section{Stability estimates based on the ambiguity function relation}\label{sec:stab_ambiguity}
	
	\subsection{Stability for a single island}\label{ssec:simplest_case}
	
	First, we derive stability estimates by employing equation \eqref{eq:ambiguity} and
	corollaries \ref{cor:full_ambiguity_uniqueness}--\ref{cor:bojarovska_all}. In doing this, we
	want to start with the very simple setup of corollary \ref{cor:min_ambiguity_uniqueness}.
	
	One can immediately see that there are some intricacies to the phase retrieval problem for
	signals $x \in \mathbb{C}^L$. One of those is dealing with entries $x(\ell)$ of $x$ which have
	small (or even vanishing) magnitude. For these entries, extracting the phase of $x(\ell)$ is
	unstable (or even impossible). See figure \ref{fig:smallmagnitudes} for a depiction of this
	situation. Because of this, we will mostly work with a graph capturing only the larger entries
	of the signals.
	
	\begin{figure}
		\centering
		\begin{tikzpicture}
		\draw[->] (-3.3,0) -- (3.3,0) node[above] {$\mathbb{R}$};
		\draw[->] (0,-3.3) -- (0,3.3) node[right] {$\mathrm{i} \mathbb{R}$};
		\draw[-,white] (0,0) -- (0, 0.9);
		\draw[-] (0,0) -- (-2,2) node[left] {$x(\ell)$};
		\draw[-] (-2,2) -- (2,2) node[right] {$x(\ell) + \eta$};
		\fill[black] (-2,2) circle (0.05);
		\fill[black] (2,2) circle (0.05);
		\draw[domain=0:135,smooth] plot ({0.9*cos(\x)},{0.9*sin(\x)});
		\draw[-] (0,0) -- (2,2);
		\draw[domain=0:45,smooth] plot ({.8*cos(\x)},{.8*sin(\x)});
		\node at (.5,.2) {$\beta$};
		\node at (0,.45) {$\alpha$};
		\end{tikzpicture}
		\caption{For $x(\ell), \eta \in \mathbb{C}$, the difference in absolute values satisfies
		$\lvert \lvert x(\ell) \rvert - \lvert x(\ell) + \eta\rvert \rvert \leq \lvert \eta \rvert$
		such that the map $\lvert \cdot \rvert : \mathbb{C} \to \mathbb{R}_+$ can be seen to be
		stable. On the other hand, the function which maps complex numbers to their phase is
		unstable at the origin: To see this, we can choose $x(\ell) = (-1+\mathrm{i})\epsilon$,
		$\eta = 2\epsilon$ such that $\lvert \alpha - \beta \rvert = \pi / 2 \geq \pi / (4\epsilon)
		\cdot \lvert \eta \rvert$, where $\alpha, \beta \in (-\pi,\pi]$ denote the principal values
		of the arguments of $x(\ell), x(\ell) + \eta \in \mathbb{C}$, respectively.}
		\label{fig:smallmagnitudes}
	\end{figure}
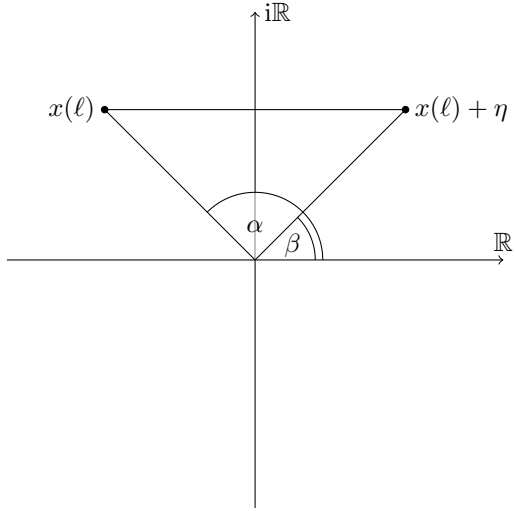
	
	\begin{definition}
		\label{def:ess_supp}
		Let $\Delta \in \mathbb{N}_0$ and $\delta > 0$. We call the graph $G_\delta =
		(V_\delta,E)$ defined by having the vertex set
		\[
			V_\delta = \{ \ell \in \{0,\dots,L-1\} \;\vert\; \lvert x(\ell) \rvert \geq \delta \}
		\]
		and an edge between $\ell,k \in V$ if 
		\[
			\lvert \ell - k \rvert \in (0,\Delta+1] \cup [L-\Delta-1,L),
		\]
		the \emph{essential support graph of $x$ with time separation parameter $\Delta$}. We will 
		also simply call the essential support graph of $x$ with time separation parameter zero
		the \emph{essential support graph of $x$}.
	\end{definition}

	The stability estimates we derive hold for bandlimited signals defined as follows:

	\begin{definition}
		Let $B \in \mathbb{N}_0$. We call $x \in \mathbb{C}^L$ \emph{$B$-bandlimited} if
		\[
			\operatorname{supp} \mathcal{F}[x] \subset \{-B,\dots,B\} \mod L.
		\]
	\end{definition}

	One important property of bandlimited signals is that their ambguity function does not have
	full support (when $4B < L$).

	\begin{proposition}
		\label{prop:bandlimited_amb}
		Let $x \in \mathbb{C}^L$ be $B$-bandlimited, for some $B \in \mathbb{N}_0$. Then, it
		holds for all $m \in \{0,\dots,L-1\}$ that
		\[
			\operatorname{supp} \mathcal{A}[x](m,\cdot) \subset \{-2B,\dots,2B\} \mod L.
		\]
	\end{proposition}

	We included a proof of this basic proposition in appendix \ref{app:proofs_conseq_formulae}. In
	the following, we will work with the $\ell^2$-norm on subsets $S$ of $\{0,\dots,L-1\}$. For 
	such sets, we define
	\[
		\lVert x \rVert_{\ell^2(S)} := \left( \sum_{\ell \in S} \lvert x(\ell) \rvert^2 \right)^\frac12.
	\]
	We may now prove the following result on the stability of magnitude retrieval:
	\begin{lemma}[Stability of magnitude retrieval]\label{lem:magnitude_stab}
		Let $\delta > 0$ and let $x,y \in \mathbb{C}^L$ be $B$-bandlimited, for some $B \in
		\mathbb{N}_0$. Define $G_\delta = (V_\delta,E)$ to be the essential 
		support graph of $x$ and let $\phi \in \mathbb{C}^L$ be such that
		\[
			\min_{n \in \{-2B,\dots,2B\}} \lvert \mathcal{A}[\phi](0,n) \rvert \geq \frac{1}{c},
		\]
		for some $c > 0$. Then, 
		\[
			\left\lVert \lvert x \rvert - \lvert y \rvert \right\rVert_{\ell^2(V_\delta)} \leq
			\frac{c}{\delta} \left\lVert M_\phi[x]-M_\phi[y] \right\rVert_\mathrm{F}.
		\]
	\end{lemma}
	\begin{proof}
		Let us start with the simple fact that for $\ell \in V_\delta$ (cf.~definition
		\ref{def:ess_supp}),
		\[
			\left\lvert \lvert x(\ell) \rvert - \lvert y(\ell) \rvert \right\rvert
			= \frac{\left\lvert \lvert x(\ell) \rvert^2 - \lvert y(\ell) \rvert^2 \right\rvert}
				{\left\lvert \lvert x(\ell) \rvert + \lvert y(\ell) \rvert \right\rvert}
			\leq \frac{1}{\delta} \left\lvert \lvert x(\ell) \rvert^2 - \lvert y(\ell) \rvert^2 \right\rvert.
		\]
		Thus, we have
		\begin{equation*}
			\left\lVert \lvert x \rvert - \lvert y \rvert \right\rVert_{\ell^2(V_\delta)}
			\leq \frac{1}{\delta} \left\lVert \lvert x \rvert^2 - \lvert y \rvert^2 \right\rVert_{\ell^2(V_\delta)}
			\leq \frac{1}{\delta} \left\lVert \lvert x \rvert^2 - \lvert y \rvert^2 \right\rVert_2.
		\end{equation*}
		Next, suppose that $4B < L$. Employing Plancherel's theorem, the ambiguity function relation
		and proposition \ref{prop:bandlimited_amb}, we find 
		\begin{align*}
			\left\lVert \lvert x \rvert^2 - \lvert y \rvert^2 \right\rVert_2^2
			&= \left\lVert \mathcal{A}[x](0,\cdot) - \mathcal{A}[y](0,\cdot) \right\rVert_2^2
			= \sum_{n = 0}^{L-1} \left\lvert \mathcal{A}[x](0,n) - \mathcal{A}[y](0,n) \right\rvert^2 \\
			&= \sum_{n = -2B}^{2B} \left\lvert \mathcal{A}[x](0,n) - \mathcal{A}[y](0,n) \right\rvert^2
			= \sum_{n = -2B}^{2B} \left\lvert
				\frac{\mathcal{F}\left[ M_\phi[x] - M_\phi[y] \right](n,0)}{\mathcal{A}[\phi](0,n)}
			\right\rvert^2 \\
			&\leq c^2 \sum_{n = -2B}^{2B} \left\lvert \mathcal{F}\left[ M_\phi[x] - M_\phi[y] \right](n,0) \right\rvert^2
			\leq c^2 \left\lVert \mathcal{F}\left[ M_\phi[x] - M_\phi[y] \right] \right\rVert_2^2 \\
			&= c^2 \left\lVert M_\phi[x] - M_\phi[y] \right\rVert_2^2.
		\end{align*}
		The proof for $4B \geq L$ is even simpler: In this case $\mathcal{A}[\phi](0,\cdot)$ is 
		lower bounded everywhere.
	\end{proof}
	
	\begin{remark}
		Note that we need to restrict our stability result to the essential support $V_\delta$ of
		$x$ because the square root $t \mapsto \sqrt{t}$ is not Lipschitz continuous. For this reason, 
		we obtain the dependence of our stability result on $\delta$. We note that in the above
		proof, we derive 
		\[
			\left\lVert \lvert x \rvert^2 - \lvert y \rvert^2 \right\rVert_2
			\leq c \left\lVert M_\phi[x] - M_\phi[y] \right\rVert_2.
		\]
		Hence, magnitude retrieval is, in fact, stable even when we consider small entries as long as we compare
		the squared magnitudes of the signals with the squared absolute values of the Gabor transform.
	\end{remark}

	Next, we turn to the retrieval of the phases. First, in accordance with corollary
	\ref{cor:min_ambiguity_uniqueness}, we will only use the entries of $\mathcal{A}[x](1,\cdot)$
	(and of $\mathcal{A}[x](0,\cdot)$) for our recovery which allows us to do phase propagation on
	adjacent entries. To be precise, we can propagate the phase from $x(\ell)$ to $x(\ell+1)$ (or
	back), for any $\ell \in \{0,\dots,L-1\}$. Mathematically this fact can be captured with the
	help of the essential support graph $G_\delta$ of $x$ with time-separation parameter zero,
	i.e.~$\Delta = 0$. In the following, we will call the connected components of $G_\delta$
	\emph{temporal islands}.
	
	\begin{theorem}[Stability of phase retrieval on a single temporal island]\label{thm:ambig_one_island}
		Let $\delta > 0$ and let $x,y \in \mathbb{C}^L$ be $B$-bandlimited, for $B \in \mathbb{N}_0$.
		For $G_\delta = (V_\delta, E)$ the essential support graph of $x$, assume that the subgraph
		induced by the vertex set $S_\delta = V_\delta \cap \operatorname{supp} y$ is connected. If
		$\phi \in \mathbb{C}^L$ is such that 
		\[
			\min_{\substack{ m \in \{0,1\}\\n \in \{-2B,\dots,2B\}}} \lvert \mathcal{A}[\phi](m,n) \rvert
			\geq \frac{1}{c},
		\]
		for some $c > 0$, then 
		\[
			\inf_{\alpha \in \mathbb{R}} \left\lVert
				x - \mathrm{e}^{\mathrm{i} \alpha} y
			\right\rVert_{\ell^2(V_\delta)} 
			\leq \frac{c}{\delta} \left( 1 +
				\frac{\sqrt{2 \lvert S_\delta \rvert} \lVert x \rVert_{\ell^2(S_\delta)}}{\delta} \right) \left\lVert 
				M_\phi[x] - M_\phi[y]
			\right\rVert_{\mathrm{F}}.
		\]
	\end{theorem}
	
	\begin{proof}\renewcommand{\qedsymbol}{}
		See section \ref{sec:proofs}.
	\end{proof}
	
	\begin{remark}
		The stability constant derived in the above result is
		\[
			\frac{c}{\delta} \left( 1 +
				\frac{\sqrt{2 \lvert S_\delta \rvert} \lVert x \rVert_{\ell^2(S_\delta)}}{\delta} \right)
		\]
		and consists of a contribution from the magnitude retrieval estimate in lemma
		\ref{lem:magnitude_stab} and the phase retrieval estimate presented in section
		\ref{sec:proofs}.
		The contribution from the phase retrieval estimate contains a factor
		$\sqrt{2 \lvert S_\delta \rvert}$ which can be interpreted as a mild ill-conditioning of
		phase retrieval as it might scale like $L^\frac12$. Additionally, the phase retrieval
		estimate contains a factor $1/\delta$ which captures the dependence on the size of the
		magnitudes of $x$.
	\end{remark}
	
	\begin{figure}
		\begin{subfigure}[b]{.475\linewidth}
			\centering
			\includegraphics[width=\textwidth]{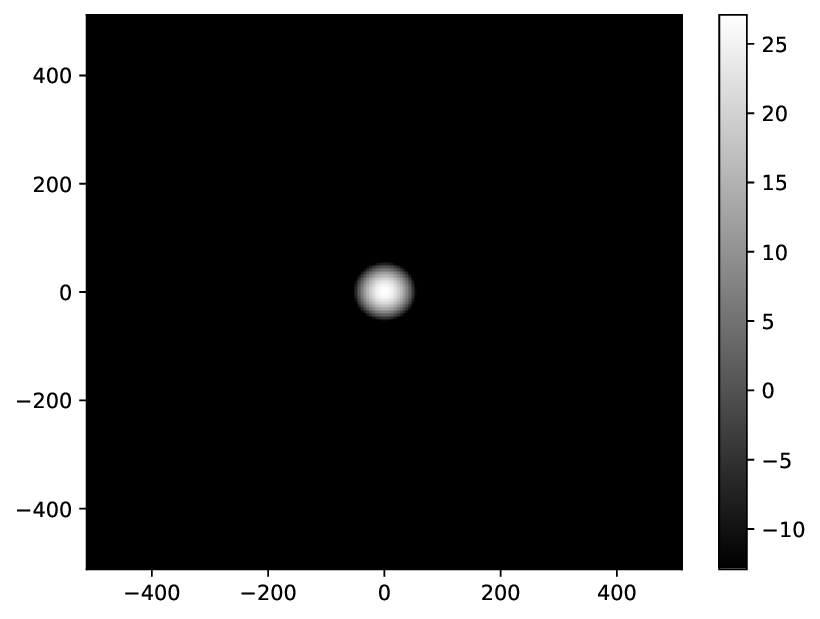}
			\caption{The ambiguity function of the Gaussian window.}
		\end{subfigure}~
		\begin{subfigure}[b]{.475\linewidth}
			\centering
			\includegraphics[width=\textwidth]{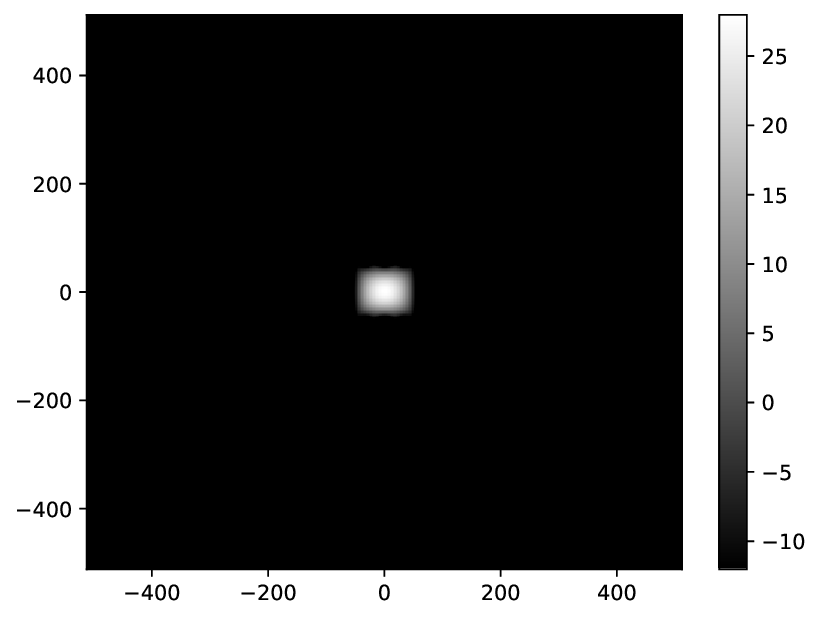}
			\caption{The ambiguity function of the Hamming window.}
		\end{subfigure}
		\\
		\begin{subfigure}[b]{.475\linewidth}
			\centering
			\includegraphics[width=\textwidth]{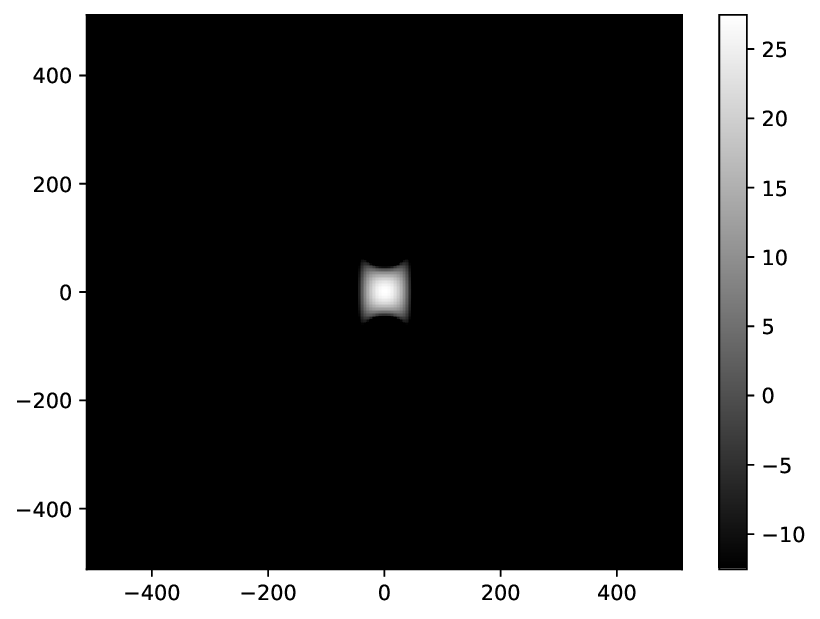}
			\caption{The ambiguity function of the Hann window.}
		\end{subfigure}~
		\begin{subfigure}[b]{.475\linewidth}
			\centering
			\includegraphics[width=\textwidth]{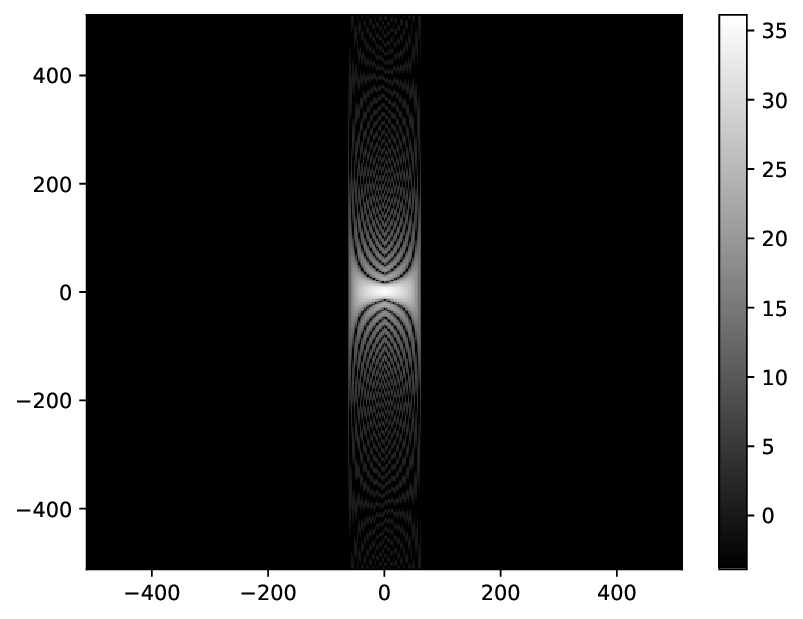}
			\caption{The ambiguity function of the rectangular window.}
		\end{subfigure}
		\caption{Visualisation of the magnitudes of the ambiguity functions of some commonly used
		window functions in a logarithmic scale (we plot $20 \log_{10} \lvert \mathcal{A}[\phi] \rvert$).} \label{fig:ambiguity_function}
	\end{figure}
	
	For a visualisation, we plot the magnitudes ambiguity functions of four commonly used window
	functions $\phi \in \mathbb{C}^L$ in figure \ref{fig:ambiguity_function}. For reference, we use
	$L = 1024$ and the windows
	
	\begin{gather*}
		\phi_{\mathrm{gauss}}(\ell) = \mathrm{e}^{-\pi \frac{(\ell-512)^2}{32^2}},
		\qquad \phi_{\mathrm{hamming}}(\ell) := \begin{cases}
			\frac{25}{46} - \frac{21}{46} \cos\left( \frac{2 \pi \ell}{63} \right) & \mbox{if } \ell = 0,\dots,63, \\
			0 & \mbox{else},
		\end{cases}
		\\
		\phi_\mathrm{hann}(\ell) := \begin{cases}
			\frac12 - \frac12 \cos\left( \frac{2 \pi \ell}{63} \right) & \mbox{if } \ell = 0,\dots,63, \\
		0 & \mbox{else},
		\end{cases} \qquad \phi_{\mathrm{rectangular}}(\ell) := \begin{cases}
			1 & \mbox{if } \ell = 0,\dots,63, \\
			0 & \mbox{else}.
		\end{cases}
	\end{gather*}

	\begin{example}
		We want to present an example to clarify the statement of Theorem
		\ref{thm:ambig_one_island}. For this purpose, we let $L \in \mathbb{N}$, with $L \geq 6$, be
		arbitrary but fixed and consider the rectangular window of length two 
		\[
			\phi(\ell) := \begin{cases}
				1 & \mbox{if } \ell = 0,1, \\
				0 & \mbox{else}.
			\end{cases}
		\]
		Note that the choice of the rectangular window of length two is rather arbitrary. One
		could, in fact, perform similar calculations with most other common window functions as 
		long as one picks the window in such a way that its time support is small enough. We 
		observe that, for large $L$, the rectangular window of length two will have a small time
		support (by which we mean that $\lvert \operatorname{supp} \phi \rvert = 2$ is 
		small compared to $L$) and a large frequency support (by which we mean that $\lvert
		\operatorname{supp} \mathcal{F}[\phi] \rvert$, which is readily seen to be $L$ or $L-1$ 
		depending on whether $L$ is even or odd, is comparable to $L$). This property of the 
		rectangular window of length two will carry over to its ambiguity function in the sense 
		that even for large $B$, we find that 
		\[
			\min_{\substack{ m \in \{0,1\}\\n \in \{-2B,\dots,2B\}}} \lvert \mathcal{A}[\phi](m,n) \rvert
		\]
		is rather large and thereby the constant $c$ in Theorem \ref{thm:ambig_one_island} is 
		rather small. So let $\widetilde c > 1$ and $B \in \mathbb{N}_0$ be such that
		$\tfrac{L}{\widetilde c} \leq 6B \leq L$. Then, we find that 
		\[
			\min_{\substack{ m \in \{0,1\}\\n \in \{-2B,\dots,2B\}}} \lvert \mathcal{A}[\phi](m,n) \rvert
			= \frac{1}{\sqrt{L}}.
		\]
		Therefore, it follows that Theorem \ref{thm:ambig_one_island} holds with $c = \sqrt{L}$,
		i.e.~for $\delta > 0$ and $x,y \in \mathbb{C}^L$ $B$-bandlimited such that the subgraph of 
		the essential support graph of $x$ induced by the vertex set $S_\delta = V_\delta \cap
		\operatorname{supp} y$ is connected, we have 
		\[
			\inf_{\alpha \in \mathbb{R}} \left\lVert
				x - \mathrm{e}^{\mathrm{i} \alpha} y
			\right\rVert_{\ell^2(V_\delta)} 
			\leq \frac{\sqrt{L}}{\delta} \left( 1 +
				\frac{\sqrt{2 \lvert S_\delta \rvert} \lVert x \rVert_{\ell^2(S_\delta)}}{\delta} \right) \left\lVert 
				M_\phi[x] - M_\phi[y]
			\right\rVert_{\mathrm{F}}.
		\]
		It follows that for the rectangular window of size two and for $6B \leq L$, the stability 
		constant scales linearly in $L$. The dimension of the space of $B$-bandlimited signals is 
		$d:=2B+1$ and therefore it follows from $\tfrac{L}{\widetilde c} \leq 6B$ that
		\[
			\inf_{\alpha \in \mathbb{R}} \left\lVert
				x - \mathrm{e}^{\mathrm{i} \alpha} y
			\right\rVert_{\ell^2(V_\delta)} 
			\leq \frac{\sqrt{6(d-1)}}{\delta} \left( 1 +
				\frac{\sqrt{6\widetilde{c} (d-1)} \lVert x \rVert_{\ell^2(S_\delta)}}{\delta} \right) \left\lVert 
				M_\phi[x] - M_\phi[y]
			\right\rVert_{\mathrm{F}}.
		\]
	\end{example}

	\begin{example}
		We also want to give an example which elaborates on the dependence of the stability constant on $\delta^{-1}$.
		On first glance, it might look like this dependence is only due to our analysis (the
		propagation of phases between adjacent entries.) However, this is not the case. Consider
		the rectangular window $\phi$ of length two along with the signals 
		\[
			x(\ell) = \begin{cases}
				1 & \mbox{if } \ell = 0, \\
				\delta & \mbox{if } \ell = 1, \\
				1 & \mbox{if } \ell = 2, \\
				0 & \mbox{else,}
			\end{cases} \qquad y(\ell) = \begin{cases}
				1 & \mbox{if } \ell = 0, \\
				\delta & \mbox{if } \ell = 1, \\
				-1 & \mbox{if } \ell = 2, \\
				0 & \mbox{else,}
			\end{cases} 
		\]
		where $\delta \in (0,1)$. Then, we have 
		\[
			\inf_{\alpha \in \mathbb{R}} \left\lVert x - \mathrm{e}^{\mathrm{i} \alpha} y \right\rVert_2 = 2.
		\]
		In addition, we have 
		\[
			\left\lVert M_\phi[x] - M_\phi[y] \right\rVert_\mathrm{F} = 2 \sqrt{2} \frac{\delta}{\sqrt{L}}.
		\]
		Therefore, it follows that
		\begin{equation}
			\label{eq:delta}
			\frac{\inf_{\alpha \in \mathbb{R}} \left\lVert x - \mathrm{e}^{\mathrm{i} \alpha} y \right\rVert_2}{\left\lVert M_\phi[x] - M_\phi[y] \right\rVert_\mathrm{F}}
				= \sqrt{\frac{L}{2}} \cdot \delta^{-1}
		\end{equation}
		is a lower bound for the stability constant. Note that this example is independent of the 
		reconstruction technique we have chosen.
	\end{example}
		
	\subsection{Multiple islands and frequency gaps}\label{ssec:multiple_islands_ambig}
	The phase propagation procedure presented as part of the proof of theorem
	\ref{thm:ambig_one_island} carries over quite naturally to the case where the graph $G_\delta =
	(V_\delta,E)$ is disconnected rather than connected. We say the graph $G_\delta$ has
	\emph{multiple temporal islands}. It is of course interesting to consider this case, as there
	is a wide range of signals for which $G$ will be disconnected. For instance, recordings of
	human speech will typically consist of multiple temporal islands as speakers tend to leave
	short gaps (i.e.~modes of silence) in between words. In addition, a discretisation of the
	signal $f_\lambda^+$ from the introduction (see figure \ref{fig:discretised}) will yield two
	temporal islands.
	
	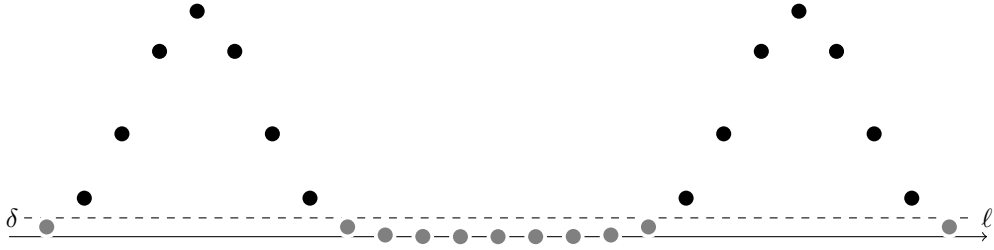
\begin{figure}
		\centering
		\begin{tikzpicture}
		\draw[->] (-6.5,0) -- (6.5,0) node[above] {$\ell$};
		\draw[dashed] (-6.3,0.25) -- (6.3,0.25);
		\foreach \i in {-2,...,22}{
			
			\pgfmathsetmacro\height{
					3*exp(-3.14159*((\i-10)/4-2)*((\i-10)/4-2)) + 3*exp(-3.14159*((\i-10)/4+2)*((\i-10)/4+2))
				}
			
			\fill[white] ({(\i-10)/2},{\height}) circle (0.15);
			
			\pgfmathparse{\height < 0.25 ? int(1) : int(0)}
			\ifnum\pgfmathresult=1
				\fill[gray] ({(\i-10)/2},{\height}) circle (0.1);
			\else
				\fill[black] ({(\i-10)/2},{\height}) circle (0.1);
			\fi
			}
		
		\node[left] at (-6.25,0.25) {$\delta$};
		\end{tikzpicture}
		\caption{The function $f_\lambda^+$ from the introduction after discretisation. Entries of
		the resulting signal that fall below a certain threshold $\delta > 0$ are coloured in grey.
		The remaining entries are coloured in black and make up the vertex set $V_\delta$. In this
		picture, we can clearly see the two temporal islands.}
		\label{fig:discretised}
	\end{figure}

	\begin{theorem}[Stability of phase retrieval on multiple temporal islands]\label{thm:unnec}
		Let $\delta > 0$ and let $x,y \in \mathbb{C}^L$ be $B$-bandlimited, for $B \in
		\mathbb{N}_0$. For the essential support graph $G_\delta = (V_\delta, E)$ of $x$, assume 
		that the subgraph induced by the vertex set $S_\delta = V_\delta \cap
		\operatorname{supp} y$ has $K$ connected components whose vertex sets are denoted by
		$\{S_k\}_{k=1}^K$. If $\phi \in \mathbb{C}^L$ is such that 
		\[
			\min_{\substack{ m \in \{0,1\}\\n \in \{-2B,\dots,2B\}}} \lvert \mathcal{A}[\phi](m,n) \rvert
			\geq \frac{1}{c},
		\]
		for some $c > 0$, then
		\[
			\inf_{\alpha_1,\dots,\alpha_K \in \mathbb{R}} \sum_{k=1}^K \left\lVert 
				x - \mathrm{e}^{\mathrm{i} \alpha_k} y 
			\right\rVert_{\ell^2(S_k)} \leq \frac{c \sqrt{K}}{\delta} \left( 1 + 
			\frac{\sqrt{ 2 \lvert S_\delta \rvert } \lVert x \rVert_{\ell^2(S_\delta)}}{\delta} \right)
			\left\lVert M_\phi[x] - M_\phi[y] \right\rVert_{\mathrm{F}}.
		\]
	\end{theorem}

	\begin{proof}\renewcommand{\qedsymbol}{}
		See theorem \ref{thm:fullstab_I}.
	\end{proof}

	Furthermore, one should note that until now we have only worked with minimal restrictions on
	the ambiguity function $\mathcal{A}[\phi]$ of the window $\phi$, i.e.~we have only utilised the
	ambiguity function for $m=0,1$. In the following, we want to generalise our result to be able
	to use $\mathcal{A}[\phi](m,n)$ for $m=0,\dots,\Delta+1$. In particular, we may be able to harness
	corollary \ref{cor:bojarovska_all} in order to propagate phase stably across a section of the
	signal in which the entries consistently fall below a threshold $\delta$. To precisely describe
	this phase propagation procedure, we make use of the essential support graph of signals $x$
	with time-separation parameter $\Delta$.
	
	\begin{theorem}[Main theorem]\label{thm:fullstab_I}
		Let $\Delta \in \mathbb{N}_0$, let $\delta > 0$ and suppose that $x,y \in
		\mathbb{C}^L$ are $B$-bandlimited, for $B \in \mathbb{N}_0$. Let $G_\delta =
		(V_\delta, E)$ be the essential support graph of $x$ with time-separation parameter
		$\Delta$ and assume that the subgraph induced by the vertex set $S_\delta = V_\delta \cap
		\operatorname{supp} y$ has $K$ connected components whose vertex sets are denoted by
		$\{S_k\}_{k=1}^K$. If $\phi \in \mathbb{C}^L$ is such that 
		\begin{equation}
			\label{eq:low_bound_amb}
			\min_{\substack{ m \in \{0,\dots,\Delta+1\}\\n \in \{-2B,\dots,2B\}}} \lvert \mathcal{A}[\phi](m,n) \rvert
			\geq \frac{1}{c},
		\end{equation}
		for some $c > 0$, then
		\[
			\inf_{\alpha_1,\dots,\alpha_K \in \mathbb{R}} \sum_{k=1}^K \left\lVert 
				x - \mathrm{e}^{\mathrm{i} \alpha_k} y 
			\right\rVert_{\ell^2(S_k)} \leq \frac{c \sqrt{K}}{\delta} \left( 1 + 
			2 \sqrt{\frac{L+\Delta}{2+\Delta} } \frac{\lVert x \rVert_{\ell^2(S_\delta)}}{\delta} \right)
			\left\lVert M_\phi[x] - M_\phi[y] \right\rVert_{\mathrm{F}}.
		\]
	\end{theorem}
	\begin{proof}\renewcommand{\qedsymbol}{}
		See section \ref{sec:proofs}.
	\end{proof}

	\begin{remark}
		Alternatively, one can show
		\[
			\inf_{\alpha_1,\dots,\alpha_K \in \mathbb{R}} \sum_{k=1}^K \left\lVert 
				x - \mathrm{e}^{\mathrm{i} \alpha_k} y 
			\right\rVert_{\ell^2(S_k)} \leq \frac{c}{\delta} \left( \sqrt{K} + 
			\sum_{k=1}^K \frac{\sqrt{2 \lvert S_k \rvert} \lVert x \rVert_{\ell^2(S_k)}}{\delta} \right)
			\left\lVert M_\phi[x] - M_\phi[y] \right\rVert_2
		\]
		under the assumptions laid out in theorem \ref{thm:fullstab_I}. We prefer the result above 
		as it is more compact. Note that neither of these results is stronger or weaker than the
		other.
	\end{remark}
	
	We note that one can obtain dual results to the above by considering $\mathcal{F}[x]$ instead of $x$.
	A straight-forward calculation (see proof of proposition \ref{prop:bandlimited_amb}) yields
	\[
		\mathcal{A}[x](m,n) = \mathrm{e}^{-2\pi\mathrm{i}\frac{mn}{L}} \mathcal{A}[\mathcal{F}[x]](n,-m),
		\qquad m,n = 0,\dots,L-1.
	\]
	In this light, it is not surprising that we can derive stability results for
	recovering $\mathcal{F}[x]$ from the measurements $M_\phi[x]$ resembling the theorems derived
	above. Note that in this way, one can also show the following
	dual of the ambguity function relation:
	\begin{equation*}
		\mathcal{F}\left[ M_\phi[x]\right](m,n)
		= \mathcal{A}[\mathcal{F}[x]](m,n) \overline{\mathcal{A}[\mathcal{F}[\phi]](m,n)},
		\qquad m,n = 0,\dots,L-1.
	\end{equation*}

	\begin{theorem}[Stability for frequency gaps]\label{thm:stab_freq_I}
		Let $B, \Delta \in \mathbb{N}_0$, let $\delta > 0$ and suppose that $x,y \in
		\mathbb{C}^L$ have their support contained in $\{-B,\dots,B\} \mod L$. Let $G_\delta =
		(V_\delta, E)$ be the essential support graph of $\mathcal{F}[x]$ with time-separation
		parameter $\Delta$ and assume that the subgraph induced by the vertex set $S_\delta =
		V_\delta \cap \operatorname{supp} \mathcal{F}[y]$ has $K$ connected components whose vertex
		sets are denoted by $\{S_k\}_{k=1}^K$. If $\phi \in \mathbb{C}^L$ is such that 
		\[
			\min_{\substack{ m \in \{-2B,\dots,2B\}\\n \in \{0,\dots,\Delta+1\}}} \lvert \mathcal{A}[\phi](m,n) \rvert
			\geq \frac{1}{c},
		\]
		for some $c > 0$, then
		\[
			\inf_{\alpha_1,\dots,\alpha_K \in \mathbb{R}} \sum_{k=1}^K \left\lVert 
				\mathcal{F}[x] - \mathrm{e}^{\mathrm{i} \alpha_k} \mathcal{F}[y]
			\right\rVert_{\ell^2(S_k)} \leq \frac{c \sqrt{K}}{\delta} \left( 1 + 
			2 \sqrt{\frac{L+\Delta}{2+\Delta}} \frac{\lVert x \rVert_2}{\delta} \right)
			\left\lVert M_\phi[x] - M_\phi[y] \right\rVert_2.
		\]
	\end{theorem}
	
	\begin{remark}
		In the preceding pages, we have presented approaches for phase retrieval for signals with 
		multiple temporal or frequency islands. Unfortunately, it is not so clear how to extend
		this work to the more general case of time-frequency atolls considered in
		\cite{Alaifari18a,Grohs17}. It is likely that one has to come up with a different approach that
		allows one to do phase propagation in frequency and time direction simultaneously to
		actually handle time-frequency atolls.
	\end{remark}
	
	We want to end this section by remarking that from our proof strategy for the frequency result
	a straight-forward dual version of corollary \ref{cor:bojarovska_all} follows.
	
	\begin{corollary}
		Let $\Delta \in \mathbb{N}_0$ and let $x,y,\phi \in \mathbb{C}^L$ be such that $M_\phi[x]
		= M_\phi[y]$. Assume that the window $\phi$ satisfies
		\[
			\mathcal{A}[\phi](m,n) \neq 0, \qquad \mbox{for } m \in \{0,\dots,L-1\},~n\in\{0,\dots,\Delta+1\}.
		\]
		Define $G = (V,E)$ as the graph with vertex set $V = \operatorname{supp} \mathcal{F}[x]$
		and edge set $E \subset \{0,\dots,L-1\} \times \{0,\dots,L-1\}$ given by
		\[
			(\ell,k) \in E \Leftrightarrow \lvert \ell - k \rvert \in (0,\Delta+1] \cup [L-\Delta-1,L).
		\]
		If $\{V_k\}_{k=1}^K$ are the vertex sets of the connected components of $G$, then for all
		$k \in \{1,\dots,K\}$ there exists an $\alpha_k \in \mathbb{R}$ such that 
		\[
			\mathcal{F}[x](\ell)
			= \mathrm{e}^{\mathrm{i} \alpha_k} \mathcal{F}[y](\ell), \qquad \ell \in V_k.
		\]
	\end{corollary}
	
	\section{Stability estimates based on the autocorrelation relation}\label{sec:autocorrelation}
	The goal of this section is to apply the techniques we have developed thus far to the setup
	proposed in \cite{Li17}. Our approach will be designed to work for bandlimited signals $x \in
	\mathbb{C}^L$ which potentially have very small entries. In doing so, we do not need to require
	that the Fourier transform of the absolute value squared of the window is nowhere-vanishing.
	We emphasise these particularities as the stability result developed in \cite{Li17} (theorem
	4.2 on p.~375) relies on window functions for which $\mathcal{F}[\lvert \phi \rvert^2]$ is
	nowhere-vanishing, and while one may apply it to signals $x \in \mathbb{C}^L$ with very small
	entries, the resulting stability constant will be ill-behaved as it depends inversely on
	$\min_{\ell \in \operatorname{supp} x} \lvert x(\ell) \rvert^2$.
	
	Recall from section \ref{sec:prerequisites} that the authors of \cite{Li17} consider the
	graph $G = (V,E)$ with vertex set $V := \operatorname{supp}(x)$ and an edge between $\ell,k \in
	V$ if
	\[
		\lvert \ell-k \rvert \in \{\ell_\phi, L-\ell_\phi\},
	\]
	where $\ell_\phi+1$ denotes the support length of the window function. They do then propose to
	reconstruct the magnitude of a signal $x \in \mathbb{C}^L$ using the ambiguity function
	relation \eqref{eq:ambiguity} (as in section \ref{sec:stab_ambiguity}) and propagate phase from
	one entry of $x$ to another if their indices have distance $\ell_\phi$ using equation
	\eqref{eq:autocorrelation}. As before, we will work with local lower bounds on the ambiguity
	function of the window and the signal in order to ensure that all the aforedescribed steps
	can be carried out stably.
	
	In order to introduce the local lower bounds on the signal, we will have to modify the graphs
	presented in \cite{Li17} slightly: Let us consider a signal $x \in \mathbb{C}^L$, a tolerance
	parameter $\delta > 0$ and a window function $\phi \in \mathbb{C}^L$ such that
	\[
		\operatorname{supp}(\phi) = \{n_0, \dots, n_0+\ell_\phi\} \mod L,
	\]
	for $n_0, \ell_\phi \in \{0,\dots,L-1\}$ such that $2 \ell_\phi < L$. Introduce the graph
	$G_\delta = (V_\delta,E)$ with vertex set
	\begin{equation*}
		V_\delta :=
		\left\lbrace \ell \in \{0,\dots,L-1\} \;\middle\vert\; \lvert x(\ell) \rvert \geq \delta \right\rbrace
	\end{equation*}
	and an edge between $\ell,k \in V$ if
	\begin{equation*}
		\lvert \ell - k \rvert \in \{\ell_\phi, L-\ell_\phi\}.
	\end{equation*}
	Under these assumptions, we can state the following stability estimate whose proof is inspired
	by the proof of corollary \ref{cor:li_result}:
	
	\begin{theorem}\label{thm:stabli_connected}
		Let $\ell_\phi,n_0 \in \mathbb{N}_0$ such that $2 \ell_\phi < L$. Furthermore, let
		$\delta > 0$ and let $x,y \in \mathbb{C}^L$ be $B$-bandlimited, for $B \in \mathbb{N}_0$.
		Suppose that the subgraph induced by the vertex set $S_\delta = V_\delta \cap
		\operatorname{supp} y$ is connected and that the window $\phi$ satisfies
		\[
			\operatorname{supp}(\phi) = \{n_0, \dots, n_0+\ell_\phi\} \mod L
		\]
		as well as 
		\[
			\min_{k \in \{-2B,\dots,2B\}} \left\lvert
				\mathcal{F}\left[ \lvert \phi \rvert^2 \right](k)
			\right\rvert \geq \frac{1}{c},
		\]
		for some $c > 0$. Then,
		\begin{equation*}
			\inf_{\alpha \in \mathbb{R}} \left\lVert
				x - \mathrm{e}^{\mathrm{i} \alpha} y
			\right\rVert_{\ell^2(V_\delta)}
			\leq \frac{1}{\delta} \left( c
				+ \frac{2 \sqrt{\lvert S_\delta \rvert L} \lVert x \rVert_{\ell^2(S_\delta)}}{\delta
				\lvert \phi(n_0) \phi(n_0 + \ell_\phi) \rvert} \right)
				\left\lVert M_\phi[x]-M_\phi[y] \right\rVert_\mathrm{F}.
		\end{equation*}
	\end{theorem}
	\begin{proof}\renewcommand{\qedsymbol}{}
		See section \ref{sec:proofs}.
	\end{proof}
	\begin{remark}
		The stability constant in this result is
		\[
			\frac{1}{\delta} \left( c
				+ \frac{2 \sqrt{\lvert S_\delta \rvert L} \lVert x \rVert_{\ell^2(S_\delta)}}{\delta
				\lvert \phi(n_0) \phi(n_0 + \ell_\phi) \rvert} \right).
		\]
		The part of the constant due to magnitude retrieval is exactly the same as in the
		results in section \ref{sec:stab_ambiguity}. The part of the constant stemming from phase
		retrieval is a slight modification from the constants in section \ref{sec:stab_ambiguity}.
		It is mostly the term $\lvert \phi(n_0)\phi(n_0 + \ell_\phi) \rvert$ in the denominator
		that deserves some attention. It is clear that phase propagation based on the relation
		\[
			\mathcal{F}^{-1}\left[ M_\phi[x](n,\cdot) \right](\ell_\phi)
				= x(n_0 + \ell_\phi + n)\overline{x(n_0 + n)}\phi(n_0)\overline{\phi(n_0+\ell_\phi)}
		\]
		will be unstable whenever the ends of the window $\phi(n_0)$ and $\phi(n_0 + \ell_\phi)$
		are close to zero. In particular, the reconstruction method proposed by the authors of
		\cite{Eldar15, Li17} benefits from windows for which $\lvert \phi(n_0)
		\phi(n_0 + \ell_\phi) \rvert$ is large such as the Hamming or rectangular window.
	\end{remark}
	As remarked before, in contrast to the stability result in \cite{Li17}, our result is
	applicable even when the Fourier transform of the magnitude squared of the window function
	$\mathcal{F}[\lvert \phi \rvert^2]$ has vanishing entries at the cost of only being applicable
	to bandlimited signals. We should also note that in \cite{Li17} the stability constant for the
	phase retrieval estimate scales like $\sqrt{\lvert \operatorname{supp} x \rvert} \cdot L^3$
	(which becomes $L^{7/2}$ for nowhere-vanishing signals) whereas our stability constant merely
	scales like $\sqrt{\lvert S_\delta \rvert L}$ (which becomes $L$ for signals whose entries have
	absolute values in excess of the threshold $\delta$).
	\begin{remark}[On disconnected graphs and duality results]
		Finally, we would like to note that one can prove a result resembling theorem
		\ref{thm:stabli_connected} in the case where $S_\delta$ has $K \in \mathbb{N}$
		connected components whose vertex sets are denoted by $S_1,\dots,S_K \subset S_\delta$.
		
		\vspace{5pt}
		Similarly, we may utilise lemma \ref{lem:autocorrelation} in order to deduce that
		\[
			\mathcal{F}\left[ M_\phi[x](\cdot,n) \right](m) = \frac{1}{\sqrt{L}} \cdot
			\sum_{k=0}^{L-1} \mathcal{F}[x](k) \overline{ \mathcal{F}[x](k-m) }
				\mathcal{F}[\phi](k-m-n) \overline{ \mathcal{F}[\phi](k-n) }
		\]
		holds, for $x,\phi \in \mathbb{C}^L$ and $m,n = 0,\dots,L-1$. This in turn can be used to
		deduce a stability result which is essentially the Fourier-dual of theorem
		\ref{thm:stabli_connected}.
	\end{remark}
	
	\section{Proofs of the main results}\label{sec:proofs}
	
	\begin{proof}[Proof of corollary \ref{cor:bojarovska_all}]
		By the ambiguity function relation and our assumptions, we find that 
		\[
			\mathcal{A}[x](m,n) = \mathcal{A}[y](m,n), \qquad \mbox{for } m \in \{0,\dots,\Delta+1\},
				~n\in\{0,\dots,L-1\}.
		\]
		Therefore, 
		\begin{equation}\label{eq:phase_prop}
			x(\ell) \overline{x(\ell-m)} = y(\ell) \overline{y(\ell-m)},
			\qquad \mbox{for } \ell \in \{0,\dots,L-1\},~m \in \{0,\dots,\Delta+1\},
		\end{equation}
		and in particular $x$ and $y$ have the same magnitudes. Let us now consider $k \in
		\{1,\dots,K\}$ as well as some $\ell_0$ in $V_k$. We have $\lvert x(\ell_0) \rvert =
		\lvert y(\ell_0) \rvert$ and hence $x(\ell_0) = \mathrm{e}^{\mathrm{i} \alpha_k}
		y(\ell_0)$, for some $\alpha_k \in \mathbb{R}$. As $V_k$ is the vertex set of a 
		connected component of $G$, it follows that for all $\ell \in V_k \setminus \{\ell_0\}$,
		there exists a (simple) path from $\ell_0$ to $\ell$. Therefore, we can consider $\ell \in
		V_k \setminus \{\ell_0\}$ and let $(u_0,\dots,u_n)$ be the vertex sequence of the path from
		$u_0 = \ell_0$ to $u_n = \ell$. For $j \in \{0,\dots,n-1\}$ one has, by definition of the
		edge set, that
		\[
			\lvert u_{j+1} - u_j \rvert \in (0,\Delta+1] \cup [L-\Delta-1,L).
		\]
		Thus, there exists an $m_j \in \{1,\dots,\Delta+1\}$ such that $u_{j+1} - u_j = m_j
		\mod L$ or $u_j - u_{j+1} = m_j \mod L$. In either case, it follows from equation
		\eqref{eq:phase_prop} that $x(u_{j+1}) \overline{x(u_j)} = y(u_{j+1}) \overline{y(u_j)}$.
		By induction on $j$, we find that $x(\ell) = \mathrm{e}^{\mathrm{i} \alpha_k} y(\ell)$.
	\end{proof}

	\begin{proof}[Proof of theorem \ref{thm:ambig_one_island}]
		The case $4B \leq L$ is similar to the case $4B > L$ but simpler: Indeed consider $4B \leq
		L$. In this case, we have 
		\[
			\min_{\substack{ m \in \{0,1\}\\n \in \{0,\dots,L-1\}}} \lvert \mathcal{A}[\phi](m,n) \rvert
			\geq \frac{1}{c}
		\]
		by assumption. Therefore, we can replace all sums over $\{-2B,\dots,2B\}$ by sums over
		$\{0,\dots,L-1\}$ in this proof. So let us consider $4B > L$:
		Let $\alpha \in \mathbb{R}$ be arbitrary. Employing proposition
		\ref{prop:split_phase_magnitude}, we have
		\begin{equation}\label{eq:NN}
			\left\lVert x - \mathrm{e}^{\mathrm{i} \alpha} y \right\rVert_{\ell^2(V_\delta)}
			\leq \left\lVert
				\lvert x \rvert - \lvert y \rvert
			\right\rVert_{\ell^2(V_\delta)} + \left( \sum_{\ell \in S_\delta}
				\lvert x(\ell) \rvert^2
				\left\lvert
					\frac{x(\ell)}{\lvert x(\ell) \rvert}
					- \mathrm{e}^{\mathrm{i} \alpha}
					\frac{y(\ell)}{\lvert y(\ell) \rvert} \right\rvert^2 \right)^\frac12.
		\end{equation}
		The magnitude difference is estimated as in lemma \ref{lem:magnitude_stab}. For the
		estimate of the phase difference, we develop inequalities in the following. Let $\ell,k
		\in S_\delta$. According to proposition \ref{prop:phase_propagation_estimate}, we have
		\begin{equation*}
			\left\lvert \frac{x(\ell)}{\lvert x(\ell) \rvert}
				- \mathrm{e}^{\mathrm{i} \alpha}
				\frac{y(\ell)}{\lvert y(\ell) \rvert} \right\rvert
			\leq \left\lvert \frac{x(k)}{\lvert x(k) \rvert}
				- \mathrm{e}^{\mathrm{i} \alpha}
				\frac{y(k)}{\vert y(k) \rvert} \right\rvert
			+ \frac{
				2 \left\lvert x(\ell) \overline{x(k)} - y(\ell) \overline{y(k)} \right\rvert
			}{
				\lvert x(\ell)x(k) \rvert
			}.
		\end{equation*}
		Using the above inequality recursively, one obtains that for all $M \in \mathbb{N}$ and
		$u_0,u_1,\dots,u_M \in S_\delta$:
		\begin{equation}\label{eq:phaseprop}
			\left\lvert \frac{x(u_M)}{\lvert x(u_M) \rvert} - \mathrm{e}^{\mathrm{i} \alpha}
				\frac{y(u_M)}{\lvert y(u_M) \rvert} \right\rvert 
			\leq \left\lvert \frac{x(u_0)}{\lvert x(u_0) \rvert} - \mathrm{e}^{\mathrm{i} \alpha}
				\frac{y(u_0)}{\lvert y(u_0) \rvert} \right\rvert
				+ 2 \sum_{j=0}^{M-1} \frac{
					\left\lvert 
					x(u_{j+1}) \overline{x(u_j)} - y(u_{j+1}) \overline{y(u_j)}
					\right\rvert
				}{\lvert x(u_{j+1})
				x(u_j) \rvert}.
		\end{equation}
		Suppose now that $\ell_0$ is chosen such
		that any other vertex $\ell \in S_\delta$ has graph distance (in the induced subgraph) at
		most $\lvert S_\delta \rvert /2 $ from $\ell_0$. Then, for any $\ell \in S_\delta \setminus
		\{ \ell_0 \}$, there exists $M(\ell) \in \mathbb{N}$, with $M(\ell) \leq \lvert S_\delta
		\rvert / 2$, and a sequence $u_0^\ell = \ell_0, u_1^\ell, \dots, u_{M(\ell)}^\ell = \ell$
		in $S_\delta$ such that (cf.~definition \ref{def:ess_supp} with $\Delta = 0$)
		\[
			\lvert u_{j+1}^\ell - u_j^\ell \rvert \in \{1,L-1\}, \qquad \mbox{for } j = 0,\dots,M(\ell)-1.
		\]
		Therefore, there exists a sequence $\sigma_1^\ell,\dots,\sigma_{M(\ell)}^\ell$ in $\{ -1,1
		\}$ such that
		\[
			u_{j+1}^\ell - u_j^\ell = \sigma_{j+1}^\ell \mod L, \qquad \mbox{for } j = 0,\dots,M(\ell)-1.
		\]
		Now, let $\alpha \in \mathbb{R}$ be such that 
		\[
			\left\lvert \frac{x(\ell_0)}{\lvert x(\ell_0) \rvert} -
			\mathrm{e}^{\mathrm{i} \alpha}
				\frac{y(\ell_0)}{\lvert y(\ell_0) \rvert} \right\rvert = 0.
		\]
		Then, we have for any $\ell \in S_\delta$, according to the above considerations (and inequality
		\eqref{eq:phaseprop}), that
		\begin{equation*}
			\left\lvert \frac{x(\ell)}{\lvert x(\ell) \rvert} - \mathrm{e}^{\mathrm{i} \alpha}
				\frac{y(\ell)}{\lvert y(\ell) \rvert} \right\rvert 
			\leq 2 \sum_{j=1}^{M(\ell)} \frac{
					\left\lvert 
					x(u_j^\ell) \overline{x(u_j^\ell - \sigma_j^\ell)} - y(u_j^\ell) \overline{y(u_j^\ell - \sigma_j^\ell)}
					\right\rvert
				}{\lvert x(u_j^\ell)
				x(u_j^\ell - \sigma_j^\ell) \rvert}.
		\end{equation*}
		For the second term of the right-hand side of inequality \eqref{eq:NN} this yields
		\begin{multline*}
			\left( \sum_{\ell \in S_\delta} \lvert x(\ell) \rvert^2 \left\lvert
				\frac{x(\ell)}{\lvert x(\ell) \rvert}
					- \mathrm{e}^{\mathrm{i} \alpha}
					\frac{y(\ell)}{\lvert y(\ell) \rvert}
			\right\rvert^2 \right)^\frac12 \\
			\leq 2 \left( \sum_{\ell \in S_\delta} \lvert x(\ell) \rvert^2 \left(
				\sum_{j=1}^{M(\ell)} \frac{
					\left\lvert 
						x(u_j^\ell) \overline{x(u_j^\ell - \sigma_j^\ell)}
							- y(u_j^\ell) \overline{y(u_j^\ell - \sigma_j^\ell)}
					\right\rvert
				}{
					\lvert x(u_j^\ell) x(u_j^\ell - \sigma_j^\ell) \rvert
				}
			\right)^2 \right)^\frac12.
		\end{multline*}
		Applying Jensen's inequality on the square of the inner sum and noting that $M(\ell) \leq
		\lvert S_\delta \rvert / 2$, we obtain
		\begin{multline*}
			\left( \sum_{\ell \in S_\delta} \lvert x(\ell) \rvert^2 \left\lvert
				\frac{x(\ell)}{\lvert x(\ell) \rvert}
					- \mathrm{e}^{\mathrm{i} \alpha}
					\frac{y(\ell)}{\lvert y(\ell) \rvert}
			\right\rvert^2 \right)^\frac12 \\
			\leq \sqrt{2 \lvert S_\delta \rvert} \left( \sum_{\ell \in S_\delta} \sum_{j=1}^{M(\ell)}
				\frac{
					\left\lvert x(\ell) \right\rvert^2
				}{
					\lvert x(u_j^\ell) x(u_j^\ell - \sigma_j^\ell) \rvert^2
				}
				\left\lvert 
					x(u_j^\ell) \overline{x(u_j^\ell - \sigma_j^\ell)}
						- y(u_j^\ell) \overline{y(u_j^\ell - \sigma_j^\ell)}
				\right\rvert^2
			\right)^\frac12.
		\end{multline*}
		Since $u^\ell_j \in S_\delta$, for $j \in \{1,\dots,M(\ell)\}$, we can further estimate
		\begin{multline*}
			\left( \sum_{\ell \in S_\delta} \lvert x(\ell) \rvert^2 \left\lvert
				\frac{x(\ell)}{\lvert x(\ell) \rvert}
					- \mathrm{e}^{\mathrm{i} \alpha}
					\frac{y(\ell)}{\lvert y(\ell) \rvert}
			\right\rvert^2 \right)^\frac12 \\
			\leq \frac{\sqrt{2 \lvert S_\delta \rvert}}{\delta^2}
			\left( \sum_{\ell \in S_\delta} \sum_{j=1}^{M(\ell)}
				\left\lvert x(\ell) \right\rvert^2
					\left\lvert 
						x(u_j^\ell) \overline{x(u_j^\ell - \sigma_j^\ell)}
							- y(u_j^\ell) \overline{y(u_j^\ell - \sigma_j^\ell)}
					\right\rvert^2
			\right)^\frac12
		\end{multline*}
		and with $\sigma^\ell_j \in \{ \pm 1 \}$, for $j \in \{1,\dots,M(\ell)\}$, we also get
		\begin{multline*}
			\left( \sum_{\ell \in S_\delta} \lvert x(\ell) \rvert^2 \left\lvert
				\frac{x(\ell)}{\lvert x(\ell) \rvert}
					- \mathrm{e}^{\mathrm{i} \alpha}
					\frac{y(\ell)}{\lvert y(\ell) \rvert}
			\right\rvert^2 \right)^\frac12 \\
			\leq \frac{\sqrt{2 \lvert S_\delta \rvert}}{\delta^2}
			\left( \sum_{\sigma \in \{-1,1\}} \sum_{\ell \in S_\delta} \sum_{j=1}^{M(\ell)}
				\left\lvert x(\ell) \right\rvert^2
					\left\lvert 
						x(u_j^\ell) \overline{x(u_j^\ell - \sigma)}
							- y(u_j^\ell) \overline{y(u_j^\ell - \sigma)}
					\right\rvert^2
			\right)^\frac12.
		\end{multline*}
		There are no repetitions in the sequences $u^\ell_1,u^\ell_2,\dots,u^\ell_{M(\ell)}$ and
		hence
		\begin{multline*}
			\left( \sum_{\ell \in S_\delta} \lvert x(\ell) \rvert^2 \left\lvert
				\frac{x(\ell)}{\lvert x(\ell) \rvert}
					- \mathrm{e}^{\mathrm{i} \alpha}
					\frac{y(\ell)}{\lvert y(\ell) \rvert}
			\right\rvert^2 \right)^\frac12 \\
			\leq \frac{\sqrt{2 \lvert S_\delta \rvert}}{\delta^2}
			\left( \sum_{\sigma \in \{-1,1\}} \sum_{\ell \in S_\delta} \sum_{u \in S_\delta}
				\left\lvert x(\ell) \right\rvert^2
					\left\lvert 
						x(u) \overline{x(u - \sigma)}
							- y(u) \overline{y(u - \sigma)}
					\right\rvert^2
			\right)^\frac12.
		\end{multline*}
		Therefore, we have 
		\begin{multline*}
			\left( \sum_{\ell \in S_\delta} \lvert x(\ell) \rvert^2 \left\lvert
				\frac{x(\ell)}{\lvert x(\ell) \rvert}
					- \mathrm{e}^{\mathrm{i} \alpha}
					\frac{y(\ell)}{\lvert y(\ell) \rvert}
			\right\rvert^2 \right)^\frac12 \\
			\leq \frac{
				\sqrt{2 \lvert S_\delta \rvert} \lVert x \rVert_{\ell^2(S_\delta)}
			}{
				\delta^2
			}
			\left( \sum_{\sigma \in \{-1,1\}} \sum_{u = 0}^{L-1}
				\left\lvert 
					x(u) \overline{x(u - \sigma)}
						- y(u) \overline{y(u - \sigma)}
				\right\rvert^2
			\right)^\frac12.
		\end{multline*}
		Suppose now that $4B < L$. By Plancherel's theorem, it holds that
		\begin{align*}
			&\left( \sum_{\ell \in S_\delta} \lvert x(\ell) \rvert^2 \left\lvert
				\frac{x(\ell)}{\lvert x(\ell) \rvert}
					- \mathrm{e}^{\mathrm{i} \alpha}
					\frac{y(\ell)}{\lvert y(\ell) \rvert}
			\right\rvert^2 \right)^\frac12 \\
			&\qquad\qquad\qquad\qquad\leq \frac{
				\sqrt{2 \lvert S_\delta \rvert} \lVert x \rVert_{\ell^2(S_\delta)}
			}{
				\delta^2
			}
			\left( \sum_{\sigma \in \{-1,1\}} \sum_{n = 0}^{L-1}
				\left\lvert 
					\mathcal{A}[x](\sigma,n)
						- \mathcal{A}[y](\sigma,n)
				\right\rvert^2
			\right)^\frac12 \\
			&\qquad\qquad\qquad\qquad= \frac{
				\sqrt{2 \lvert S_\delta \rvert} \lVert x \rVert_{\ell^2(S_\delta)}
			}{
				\delta^2
			}
			\left( \sum_{\sigma \in \{-1,1\}} \sum_{n = -2B}^{2B}
				\left\lvert 
					\mathcal{A}[x](\sigma,n)
						- \mathcal{A}[y](\sigma,n)
				\right\rvert^2
			\right)^\frac12.
		\end{align*}
		It follows from the ambiguity function relation and the lower bound on the ambiguity
		function of the window on $\{0,1\} \times \{-2B,\dots,2B\}$ that 
		\begin{align*}
			&\left( \sum_{\ell \in S_\delta} \lvert x(\ell) \rvert^2 \left\lvert
			\frac{x(\ell)}{\lvert x(\ell) \rvert}
				- \mathrm{e}^{\mathrm{i} \alpha}
				\frac{y(\ell)}{\lvert y(\ell) \rvert}
		\right\rvert^2 \right)^\frac12 \\
		&\qquad\qquad\qquad\qquad\leq c \cdot \frac{
			\sqrt{2 \lvert S_\delta \rvert} \lVert x \rVert_{\ell^2(S_\delta)}
		}{
			\delta^2
		}
		\left( \sum_{\sigma \in \{-1,1\}} \sum_{n = -2B}^{2B}
			\left\lvert
				\mathcal{F}\left[ M_\phi[x] - M_\phi[y] \right](n,-\sigma)
			\right\rvert^2
		\right)^\frac12 \\
		&\qquad\qquad\qquad\qquad\leq c \cdot \frac{
			\sqrt{2 \lvert S_\delta \rvert} \lVert x \rVert_{\ell^2(S_\delta)}
		}{
			\delta^2
		} \cdot \left\lVert \mathcal{F}\left[
			M_\phi[x] - M_\phi[y]
		\right] \right\rVert_{\mathrm{F}} \\
		&\qquad\qquad\qquad\qquad= c \cdot \frac{
			\sqrt{2 \lvert S_\delta \rvert} \lVert x \rVert_{\ell^2(S_\delta)}
		}{
			\delta^2
		} \cdot \left\lVert
			M_\phi[x] - M_\phi[y]
		\right\rVert_{\mathrm{F}},
	\end{align*}
	where we have used Plancherel's theorem in the last equality.
	\end{proof}
	
	\begin{proof}[Proof of theorem \ref{thm:fullstab_I}]
		Let $k \in \{1,\dots,K\}$ and $\alpha_k \in \mathbb{R}$. As in the proof of theorem
		\ref{thm:ambig_one_island}, we start by splitting the estimate into a phase and a magnitude
		estimate using proposition \ref{prop:split_phase_magnitude}:
		\[
			\left\lVert x - \mathrm{e}^{\mathrm{i} \alpha_k} y \right\rVert_{\ell^2(S_k)}
			\leq \left\lVert
				\lvert x \rvert - \lvert y \rvert
			\right\rVert_{\ell^2(S_k)} + \left( \sum_{\ell \in S_k}
				\lvert x(\ell) \rvert^2
				\left\lvert
					\frac{x(\ell)}{\lvert x(\ell) \rvert}
					- \mathrm{e}^{\mathrm{i} \alpha}
					\frac{y(\ell)}{\lvert y(\ell) \rvert} \right\rvert^2 \right)^\frac12.
		\]
		As the connected components $\{S_k\}_{k=1}^K$ are disjoint subsets of $V_\delta$, we can
		use Jensen's inequality to see that 
		\begin{align*}
			\sum_{k=1}^K \left\lVert
				\lvert x \rvert - \lvert y \rvert
			\right\rVert_{\ell^2(S_k)}
			\leq \sqrt{K} \left( \sum_{k=1}^K \left\lVert
				\lvert x \rvert - \lvert y \rvert
			\right\rVert_{\ell^2(S_k)}^2 \right)^\frac12
			&= \sqrt{K} \left\lVert
				\lvert x \rvert - \lvert y \rvert
			\right\rVert_{\ell^2(\bigcup_{k=1}^K S_k)} \\
			&\leq \sqrt{K} \left\lVert
				\lvert x \rvert - \lvert y \rvert
			\right\rVert_{\ell^2(V_\delta)}.
		\end{align*}
		Employing lemma \ref{lem:magnitude_stab}, we obtain for the magnitude retrieval estimate
		\[
			\left\lVert \lvert x \rvert - \lvert y \rvert \right\rVert_{\ell^2(V_\delta)} \leq
			\frac{c}{\delta} \left\lVert M_\phi[x]-M_\phi[y] \right\rVert_\mathrm{F}.
		\]
		The phase difference is estimated just like in theorem \ref{thm:ambig_one_island}: First, 
		observe that there must exist a vertex $\ell_0 \in S_k$ such
		that any other vertex $\ell \in S_k$ has graph distance $M(\ell)$ at most
		$\tfrac{L+\Delta}{2+\Delta}$ from $\ell_0$. Indeed, consider the following argument: The
		worst case which could happen is that we need to connect the vertex $0$ to the 
		vertex $\lfloor L/2 \rfloor$. By definition of the graph, it will take us exactly one step
		to go from $0$ to any $\ell \in \{1,\dots,\Delta+1\} \cap S_k$; it will take us exactly 
		two steps to go from $0$ to $\Delta + 2$, if the latter is in $S_k$; and it will take us at 
		most three steps to go from $0$ to $\ell \in \{\Delta+3,2\Delta+3\} \cap S_k$. Following 
		this logic, it is not too hard to see that it will take us at most $2n$ steps to go from 
		$0$ to $n(\Delta+2)$, if the latter is in $S_k$, and it will take us at most $2n+1$ 
		steps to go from $0$ to $\ell \in \{n(\Delta+2)+1,\dots,(n+1)(\Delta+2)-1\} \cap S_k$. So, 
		if there exists an element $n \in \mathbb{N}$ such that $n(\Delta+2) = \lfloor L/2 \rfloor$,
		then it will take us at most
		\[
			2n = \frac{2 \lfloor L/2 \rfloor}{\Delta + 2} \leq \frac{L}{\Delta+2}
			\leq \frac{L+\Delta}{2 + \Delta}
		\]
		steps to connect $0$ and $\lfloor L/2 \rfloor$. Similarly, if there is an element $n \in
		\mathbb{N}_0$ such that $\lfloor L/2 \rfloor \in \{n(\Delta+2)+1,\dots,
		(n+1)(\Delta+2)-1\}$, then there is a $\beta \in \{1,\dots,\Delta+1\}$ such that 
		\[
			2n+1 = \frac{2 \lfloor L/2 \rfloor - 2\beta}{\Delta + 2} + 1
			\leq \frac{L - 2}{\Delta + 2} + 1 = \frac{L + \Delta}{\Delta+2}.
		\]
		So for any $\ell \in S_k \setminus \{\ell_0\}$,
		there exists a path $u_0^\ell = \ell_0,u_1^\ell,\dots,u_{M(\ell)}^\ell = \ell$ from
		$\ell_0$ to $\ell$. By definition, this path satisfies 
		\[
			\left\lvert u_{j+1}^\ell - u_j^\ell \right\rvert \in (0,\Delta+1] \cup [L-\Delta-1,L),
			\qquad \mbox{for } j = 0,\dots,M(\ell)-1.
		\]
		Therefore, there exist sequences $\sigma_1^\ell,\dots,\sigma_{M(\ell)}^\ell \in \{\pm
		1\}$ and $\Delta_1^\ell,\dots,\Delta_{M(\ell)}^\ell \in \{1,\dots,\Delta+1\}$ such that 
		\[
			u_{j+1}^\ell - u_j^\ell = \sigma_{j+1}^\ell \Delta_{j+1}^\ell \mod L, \qquad \mbox{for } j = 0,
				\dots,M(\ell)-1.
		\]
		We let $\alpha_k \in \mathbb{R}$ be chosen in such a way that 
		\[
			\left\lvert \frac{x(\ell_0)}{\lvert x(\ell_0) \rvert} -
			\mathrm{e}^{\mathrm{i} \alpha_k}
				\frac{y(\ell_0)}{\lvert y(\ell_0) \rvert} \right\rvert = 0.
		\]
		Proceeding as in the proof of theorem \ref{thm:ambig_one_island} (now with $M(\ell) \leq
		(L+\Delta)/(2+\Delta)$), we derive
		\begin{multline*}
			\left( \sum_{\ell \in S_k} \lvert x(\ell) \rvert^2 \left\lvert
				\frac{x(\ell)}{\lvert x(\ell) \rvert}
					- \mathrm{e}^{\mathrm{i} \alpha_k}
					\frac{y(\ell)}{\lvert y(\ell) \rvert}
			\right\rvert^2 \right)^\frac12 \\
			\leq 2 \sqrt{\frac{L+\Delta}{2+\Delta}} \frac{1}{\delta^2}
			\left( \sum_{\ell \in S_k} \sum_{j=1}^{M(\ell)}
				\left\lvert x(\ell) \right\rvert^2
					\left\lvert 
						x(u_j^\ell) \overline{x(u_j^\ell - \sigma_j^\ell \Delta_j^\ell)}
							- y(u_j^\ell) \overline{y(u_j^\ell - \sigma_j^\ell \Delta_j^\ell)}
					\right\rvert^2
			\right)^\frac12.
		\end{multline*}
		We first treat the case $2\Delta < L-2$ and use that
		$\sigma_j^\ell \Delta_j^\ell \in \{-\Delta-1,\dots,\Delta+1\}$ to estimate
		\begin{multline*}
			\left( \sum_{\ell \in S_k} \lvert x(\ell) \rvert^2 \left\lvert
				\frac{x(\ell)}{\lvert x(\ell) \rvert}
					- \mathrm{e}^{\mathrm{i} \alpha_k}
					\frac{y(\ell)}{\lvert y(\ell) \rvert}
			\right\rvert^2 \right)^\frac12 \\
			\leq 2 \sqrt{\frac{L+\Delta}{2+\Delta}} \frac{1}{\delta^2}
			\left( \sum_{m = -\Delta-1}^{\Delta+1} \sum_{\ell \in S_k} \sum_{j=1}^{M(\ell)}
				\left\lvert x(\ell) \right\rvert^2
					\left\lvert 
						x(u_j^\ell) \overline{x(u_j^\ell - m)}
							- y(u_j^\ell) \overline{y(u_j^\ell - m)}
					\right\rvert^2
			\right)^\frac12.
		\end{multline*}
		We may use that there are no repetitions in the sequences $u_1^\ell,\dots,
		u_{M(\ell)}^\ell$ to obtain
		\begin{align*}
			&\left( \sum_{\ell \in S_k} \lvert x(\ell) \rvert^2 \left\lvert
				\frac{x(\ell)}{\lvert x(\ell) \rvert}
					- \mathrm{e}^{\mathrm{i} \alpha_k}
					\frac{y(\ell)}{\lvert y(\ell) \rvert}
			\right\rvert^2 \right)^\frac12 \\
			&\qquad\qquad\qquad\qquad\leq 2 \sqrt{\frac{L+\Delta}{2+\Delta}} \frac{1}{\delta^2}
			\left( \sum_{m = -\Delta-1}^{\Delta+1} \sum_{\ell \in S_k} \sum_{u = 0}^{L-1}
				\left\lvert x(\ell) \right\rvert^2
					\left\lvert 
						x(u) \overline{x(u - m)}
							- y(u) \overline{y(u - m)}
					\right\rvert^2
			\right)^\frac12 \\
			&\qquad\qquad\qquad\qquad= 2 \sqrt{\frac{L+\Delta}{2+\Delta}} \frac{\lVert x \rVert_{\ell^2(S_k)}}{\delta^2}
			\left( \sum_{m = -\Delta-1}^{\Delta+1} \sum_{u = 0}^{L-1}
				\left\lvert 
					x(u) \overline{x(u - m)}
						- y(u) \overline{y(u - m)}
				\right\rvert^2
			\right)^\frac12.
		\end{align*}
		Suppose furthermore that $4B > L$. According to Plancherel's theorem, we find that 
		\begin{align*}
			&\left( \sum_{\ell \in S_k} \lvert x(\ell) \rvert^2 \left\lvert
				\frac{x(\ell)}{\lvert x(\ell) \rvert}
					- \mathrm{e}^{\mathrm{i} \alpha_k}
					\frac{y(\ell)}{\lvert y(\ell) \rvert}
			\right\rvert^2 \right)^\frac12 \\
			&\qquad\qquad\qquad\qquad\leq 2 \sqrt{\frac{L+\Delta}{2+\Delta}} \frac{\lVert x \rVert_{\ell^2(S_k)}}{\delta^2}
			\left( \sum_{m = -\Delta-1}^{\Delta+1} \sum_{n = 0}^{L-1}
				\left\lvert 
					\mathcal{A}[x](m,n) - \mathcal{A}[y](m,n)
				\right\rvert^2
			\right)^\frac12 \\
			&\qquad\qquad\qquad\qquad= 2 \sqrt{\frac{L+\Delta}{2+\Delta}} \frac{\lVert x \rVert_{\ell^2(S_k)}}{\delta^2}
			\left( \sum_{m = -\Delta-1}^{\Delta+1} \sum_{n = -2B}^{2B}
				\left\lvert 
					\mathcal{A}[x](m,n) - \mathcal{A}[y](m,n)
				\right\rvert^2
			\right)^\frac12.
		\end{align*}
		Next, we use the ambiguity function relation, inequality \eqref{eq:low_bound_amb} as well
		as the symmetry of the ambiguity function of the window around the origin to derive
		\begin{align*}
			&\left( \sum_{\ell \in S_k} \lvert x(\ell) \rvert^2 \left\lvert
				\frac{x(\ell)}{\lvert x(\ell) \rvert}
					- \mathrm{e}^{\mathrm{i} \alpha_k}
					\frac{y(\ell)}{\lvert y(\ell) \rvert}
			\right\rvert^2 \right)^\frac12 \\
			&\qquad\qquad\qquad\qquad\leq 2c \sqrt{\frac{L+\Delta}{2+\Delta}} \frac{\lVert x \rVert_{\ell^2(S_k)}}{\delta^2}
			\left( \sum_{m = -\Delta-1}^{\Delta+1} \sum_{n = -2B}^{2B}
				\left\lvert 
					\mathcal{F}\left[ M_\phi[x] - M_\phi[y] \right](n,m)
				\right\rvert^2
			\right)^\frac12 \\
			&\qquad\qquad\qquad\qquad\leq 2c \sqrt{\frac{L+\Delta}{2+\Delta}} \frac{\lVert x \rVert_{\ell^2(S_k)}}{\delta^2}
			\left\lVert \mathcal{F}\left[ M_\phi[x] - M_\phi[y] \right] \right\rVert_2 \\
			&\qquad\qquad\qquad\qquad= 2c \sqrt{\frac{L+\Delta}{2+\Delta}} \frac{\lVert x \rVert_{\ell^2(S_k)}}{\delta^2}
			\left\lVert M_\phi[x] - M_\phi[y] \right\rVert_2.
		\end{align*}
		Note that we may once again use Jensen's inequality to show that 
		\[
			\sum_{k=1}^K \lVert x \rVert_{\ell^2(S_k)} \leq \sqrt{K} \lVert x \rVert_2.
		\]
		Thus, combining the phase and the magnitude estimates yields
		\[
			\inf_{\alpha_1,\dots,\alpha_K} \sum_{k=1}^K \left\lVert x - \mathrm{e}^{\mathrm{i} \alpha_k}
				y \right\rVert_{\ell^2(S_k)} \leq \frac{c \sqrt{K}}{\delta}
				\left( 1 + 2 \sqrt{\frac{L+\Delta}{2+\Delta}} \frac{\lVert x \rVert_2}{\delta} \right)
				\left\lVert M_\phi[x] - M_\phi[y] \right\rVert_2.
		\]
		The cases in which $2\Delta \geq L-2$ or $4B \geq L$ are dealt with similarly.
	\end{proof}
	
	\begin{proof}[Proof of theorem \ref{thm:stabli_connected}]
		Let $\alpha \in \mathbb{R}$. As in the proof of theorem \ref{thm:ambig_one_island}, we
		start by splitting the estimate into a phase and a magnitude estimate
		\[
			\left\lVert x - \mathrm{e}^{\mathrm{i} \alpha_k} y \right\rVert_{\ell^2(V_\delta)}
			\leq \left\lVert
				\lvert x \rvert - \lvert y \rvert
			\right\rVert_{\ell^2(V_\delta)} + \left( \sum_{\ell \in S_\delta}
				\lvert x(\ell) \rvert^2
				\left\lvert
					\frac{x(\ell)}{\lvert x(\ell) \rvert}
					- \mathrm{e}^{\mathrm{i} \alpha}
					\frac{y(\ell)}{\lvert y(\ell) \rvert} \right\rvert^2 \right)^\frac12.
		\]
		First, noting that
		\[
			\mathcal{F}\left[ \lvert \phi \rvert^2 \right](k) = \mathcal{A}[\phi](0,k),
			\qquad \mbox{for } k = 0,\dots,L-1,
		\]
		we can apply lemma \ref{lem:magnitude_stab} to obtain the estimate 
		\[
			\left\lVert \lvert x \rvert - \lvert y \rvert \right\rVert_{\ell^2(V_\delta)} \leq
			\frac{c}{\delta} \left\lVert M_\phi[x]-M_\phi[y] \right\rVert_\mathrm{F}.
		\]
		For the estimate on the phase retrieval part, we need to consider a new strategy based on
		equation \eqref{eq:autocorrelation}. First, we find that there must exist a vertex $\ell_0
		\in S_\delta$ such that any other vertex $\ell \in S_\delta$ has graph distance $M(\ell)$
		at most $\lvert S_\delta \rvert/2$ from $\ell_0$. So for any $\ell \in S_\delta \setminus
		\{\ell_0\}$, there exists a path $u_0^\ell = \ell_0,u_1^\ell,\dots,u_{M(\ell)}^\ell = \ell$
		from $\ell_0$ to $\ell$. By definition, this path satisfies 
		\[
			\left\lvert u_{j+1}^\ell - u_j^\ell \right\rvert \in \{\ell_\phi, L-\ell_\phi\},
			\qquad \mbox{for } j = 0,\dots,M(\ell)-1.
		\]
		Therefore, there exists a sequence $\sigma_1^\ell,\dots,\sigma_{M(\ell)}^\ell \in \{\pm
		1\}$ such that 
		\[
			u_{j+1}^\ell - u_j^\ell = \sigma_{j+1}^\ell \ell_\phi \mod L, \qquad \mbox{for } j = 0,
				\dots,M(\ell)-1.
		\]
		With this at hand, we proceed similarly to the proof of theorem \ref{thm:ambig_one_island}.
		We let $\alpha \in \mathbb{R}$ be chosen in such a way that 
		\[
			\left\lvert \frac{x(\ell_0)}{\lvert x(\ell_0) \rvert} -
			\mathrm{e}^{\mathrm{i} \alpha}
				\frac{y(\ell_0)}{\lvert y(\ell_0) \rvert} \right\rvert = 0.
		\]
		Then, we have that for any $\ell \in S_\delta$
		\begin{equation*}
			\left\lvert \frac{x(\ell)}{\lvert x(\ell) \rvert} - \mathrm{e}^{\mathrm{i} \alpha}
				\frac{y(\ell)}{\lvert y(\ell) \rvert} \right\rvert 
			\leq 2 \sum_{j=1}^{M(\ell)} \frac{
					\left\lvert 
					x(u_j^\ell) \overline{x(u_j^\ell - \sigma_j^\ell \ell_\phi)}
						- y(u_j^\ell) \overline{y(u_j^\ell - \sigma_j^\ell \ell_\phi)}
					\right\rvert
				}{\lvert x(u_j^\ell)
				x(u_j^\ell - \sigma_j^\ell \ell_\phi) \rvert}
		\end{equation*}
		and
		\begin{multline*}
			\left( \sum_{\ell \in S_\delta} \lvert x(\ell) \rvert^2 \left\lvert
				\frac{x(\ell)}{\lvert x(\ell) \rvert}
					- \mathrm{e}^{\mathrm{i} \alpha}
					\frac{y(\ell)}{\lvert y(\ell) \rvert}
			\right\rvert^2 \right)^\frac12 \\
			\leq \frac{\sqrt{2 \lvert S_\delta \rvert}}{\delta^2} \left( \sum_{\ell \in S_\delta} \sum_{j=1}^{M(\ell)}
			\left\lvert x(\ell) \right\rvert^2
			\left\lvert 
				x(u_j^\ell) \overline{x(u_j^\ell - \sigma_j^\ell \ell_\phi)}
					- y(u_j^\ell) \overline{y(u_j^\ell - \sigma_j^\ell \ell_\phi)}
				\right\rvert^2
			\right)^\frac12,
		\end{multline*}
		by Jensen's inequality, $M(\ell) \leq \lvert S_\delta \rvert / 2$ and the fact that
		$u_j^\ell, u_j^\ell - \sigma_j^\ell \ell_\phi \in S_\delta$, for all $j \in
		\{1,\dots,M(\ell)\}$. As before, we can further employ a crude estimate to derive
		\begin{multline*}
			\left( \sum_{\ell \in S_\delta} \lvert x(\ell) \rvert^2 \left\lvert
				\frac{x(\ell)}{\lvert x(\ell) \rvert}
					- \mathrm{e}^{\mathrm{i} \alpha}
					\frac{y(\ell)}{\lvert y(\ell) \rvert}
			\right\rvert^2 \right)^\frac12 \\
			= \frac{\sqrt{2 \lvert S_\delta \rvert} \lVert x \rVert_{\ell^2(S_\delta)}}{\delta^2} \left(
			\sum_{\sigma \in \{\pm 1\}} \sum_{u \in S_\delta}
				\left\lvert 
					x(u) \overline{x(u - \sigma \ell_\phi)}
						- y(u) \overline{y(u - \sigma \ell_\phi)}
					\right\rvert^2
				\right)^\frac12,
		\end{multline*}
		since $\sigma_j^\ell \in \{\pm 1\}$, for all $j \in \{1,\dots,M(\ell)\}$ and all $\ell \in
		S_\delta$, and because for fixed $\ell \in S_\delta$, the $u_j^\ell$ are all distinct.
		Next, we note that due to $\operatorname{supp} \phi = \{n_0,\dots,n_0 + \ell_\phi\}
		\mod L$, $2\ell_\phi < L$, and the autocorrelation relation, we have for $u \in S_\delta$
		\begin{align*}
			x(u)\overline{x(u-\ell_\phi)} \phi(n_0) \overline{\phi(n_0 + \ell_\phi)}
			&= \sum_{\ell = 0}^{L-1} x(\ell) \overline{x(\ell-\ell_\phi)} 
				\phi(\ell + n_0 - u) \overline{ \phi(\ell + n_0 + \ell_\phi - u) } \\
			&= \sqrt{L} \mathcal{F}^{-1} [M_\phi[x](u-n_0-\ell_\phi,\cdot)](\ell_\phi),
		\end{align*}
		as well as 
		\begin{align*}
			x(u)\overline{x(u+\ell_\phi)} \phi(n_0+\ell_\phi) \overline{\phi(n_0)}
			&= \sum_{\ell = 0}^{L-1} x(\ell) \overline{x(\ell+\ell_\phi)} 
				\phi(\ell + \ell_\phi + n_0 - u) \overline{ \phi(\ell + n_0 - u) } \\
			&= \sqrt{L} \mathcal{F}^{-1} [M_\phi[x](u-n_0,\cdot)](-\ell_\phi).
		\end{align*}
		This implies
		\begin{multline*}
			\left( \sum_{\ell \in S_\delta} \lvert x(\ell) \rvert^2 \left\lvert
				\frac{x(\ell)}{\lvert x(\ell) \rvert}
					- \mathrm{e}^{\mathrm{i} \alpha}
					\frac{y(\ell)}{\lvert y(\ell) \rvert}
			\right\rvert^2 \right)^\frac12 \\
			\leq \frac{\sqrt{2 \lvert S_\delta \rvert L} \lVert x \rVert_{\ell^2(S_\delta)}}{\delta^2
			\lvert \phi(n_0) \phi(n_0 + \ell_\phi) \rvert} \Bigg(
				\sum_{u \in S_\delta}
					\Big( \left\lvert 
						\mathcal{F}^{-1} [M_\phi[x](u-n_0-\ell_\phi,\cdot) - M_\phi[y](u-n_0-\ell_\phi,\cdot)](\ell_\phi)
					\right\rvert^2 \\
					+ \left\lvert 
						\mathcal{F}^{-1} [M_\phi[x](u-n_0,\cdot) - M_\phi[y](u-n_0,\cdot)](-\ell_\phi)
					\right\rvert^2 \Big) \Bigg)^\frac12.
		\end{multline*}
		Yet another crude estimate results in
		\begin{align*}
			&\left( \sum_{\ell \in S_\delta} \lvert x(\ell) \rvert^2 \left\lvert
				\frac{x(\ell)}{\lvert x(\ell) \rvert}
					- \mathrm{e}^{\mathrm{i} \alpha}
					\frac{y(\ell)}{\lvert y(\ell) \rvert}
			\right\rvert^2 \right)^\frac12 \\
			&\qquad\qquad\qquad\qquad\leq \frac{2 \sqrt{\lvert S_\delta \rvert L} \lVert x \rVert_{\ell^2(S_\delta)}}{\delta^2
			\lvert \phi(n_0) \phi(n_0 + \ell_\phi) \rvert} \left(
				\sum_{m,n = 0}^{L-1}
					\left\lvert 
						\mathcal{F}^{-1} [M_\phi[x](m,\cdot) - M_\phi[y](m,\cdot)](n)
					\right\rvert^2 \right)^\frac12 \\
			&\qquad\qquad\qquad\qquad= \frac{2 \sqrt{\lvert S_\delta \rvert L} \lVert x \rVert_{\ell^2(S_\delta)}}{\delta^2
			\lvert \phi(n_0) \phi(n_0 + \ell_\phi) \rvert} \left\lVert M_\phi[x] - M_\phi[y] \right\rVert_{\mathrm{F}}.
		\end{align*}
	\end{proof}
	
	%----------------------------- Hidden Appendix -----------------------------
	\appendix
	\section{Pointwise estimates for the stability results}
	Let us start by the typical splitting of signal differences into phase and magnitude part.
	\begin{proposition}\label{prop:split_phase_magnitude}
		Let $x,y \in \mathbb{C}^L$. Then, we have for all
		$\alpha \in \mathbb{R}$ that
		\[
			\lvert x(\ell) - \mathrm{e}^{\mathrm{i} \alpha} y(\ell) \rvert
			\leq \left\lvert \lvert x(\ell) \rvert - \lvert y(\ell) \rvert \right\rvert
			+ \lvert x(\ell) \rvert \left\lvert \frac{x(\ell)}{\lvert x(\ell) \rvert} - \mathrm{e}^{\mathrm{i} \alpha} \frac{y(\ell)}{\lvert y(\ell) \rvert} \right\rvert
		\]
		holds for all $\ell \in \{0,\dots,L-1\}$ for which $x(\ell) \neq 0$ and $y(\ell) \neq 0$. Moreover,
		\[
			\lvert x(\ell) - \mathrm{e}^{\mathrm{i} \alpha} y(\ell) \rvert
			= \left\lvert \lvert x(\ell) \rvert - \lvert y(\ell) \rvert \right\rvert
		\]
		holds for all $\ell \in \{0,\dots,L-1\}$ for which $x(\ell) = 0$ or $y(\ell) = 0$.
	\end{proposition}
	\begin{proof}
		Consider the case $x(\ell) \neq 0$ and $y(\ell) \neq 0$:
		\begin{align*}
			\lvert x(\ell) - \mathrm{e}^{\mathrm{i} \alpha} y(\ell) \rvert
			&= \left\lvert \lvert x(\ell) \rvert \frac{x(\ell)}{\lvert x(\ell) \rvert}
				- \mathrm{e}^{\mathrm{i} \alpha} \lvert y(\ell) \rvert
					\frac{y(\ell)}{\lvert y(\ell) \rvert} \right\rvert \\
			&= \left\lvert \left(
				\lvert x(\ell) \rvert - \lvert y(\ell) \rvert
			\right) \frac{\mathrm{e}^{\mathrm{i} \alpha} y(\ell)}{\lvert y(\ell) \rvert}
				+ \lvert x(\ell) \rvert \left( \frac{x(\ell)}{\lvert x(\ell) \rvert}
					- \mathrm{e}^{\mathrm{i} \alpha} \frac{y(\ell)}{\lvert y(\ell) \rvert} \right) \right\rvert \\
			&\leq \left\lvert \lvert x(\ell) \rvert - \lvert y(\ell) \rvert \right\rvert
				+ \lvert x(\ell) \rvert \left\lvert
					\frac{x(\ell)}{\lvert x(\ell) \rvert} 
					- \mathrm{e}^{\mathrm{i} \alpha} \frac{y(\ell)}{\lvert y(\ell) \rvert}
				\right\rvert.
		\end{align*}
		If $x(\ell) = 0$, then
		\[
		\lvert x(\ell) - \mathrm{e}^{\mathrm{i} \alpha} y(\ell) \rvert = \lvert y(\ell) \rvert = \left\lvert \lvert x(\ell) \rvert - \lvert y(\ell) \rvert \right\rvert.
		\]
	\end{proof}
	In addition, a result about phase propagation will be handy.
	\begin{proposition}\label{prop:phase_propagation_estimate}
		Let $x,y \in \mathbb{C}^L$ and let $\alpha \in \mathbb{R}$.
		Then, we have that
		\[
			\left\lvert \frac{x(k)}{\lvert x(k) \rvert}
				- \mathrm{e}^{\mathrm{i} \alpha} \frac{y(k)}{\lvert y(k) \rvert} \right\rvert
			\leq \left\lvert \frac{x(\ell)}{\lvert x(\ell) \rvert}
				- \mathrm{e}^{\mathrm{i} \alpha} \frac{y(\ell)}{\lvert y(\ell) \rvert} \right\rvert
			+ \frac{
				2 \left\lvert x(k) \overline{x(\ell)}- y(k) \overline{y(\ell)} \right\rvert
			}{
				\lvert x(k)x(\ell) \rvert
			}
		\]
		holds, for all $\ell, k \in \{0,\dots,L-1\}$ for which $x(k), x(\ell), y(k)$ and $y(\ell)$
		are all different from zero.
	\end{proposition}
	\begin{proof}
		We compute
		\begin{align*}
			\left\lvert \frac{x(k)}{\lvert x(k) \rvert} - \mathrm{e}^{\mathrm{i} \alpha} \frac{y(k)}{\lvert y(k) \rvert} \right\rvert &= \left\lvert \frac{x(k)\overline{x(\ell)} x(\ell)}{\lvert x(k) \rvert\lvert x(\ell) \rvert^2} - \mathrm{e}^{\mathrm{i} \alpha} \frac{y(k)\overline{y(\ell)} y(\ell)}{\lvert y(k) \rvert\lvert y(\ell) \rvert^2} \right\rvert \\
			&= \left\lvert \left( \frac{x(\ell)}{\lvert x(\ell) \rvert} - \mathrm{e}^{\mathrm{i} \alpha} \frac{y(\ell)}{\lvert y(\ell) \rvert} \right) \frac{x(k)\overline{x(\ell)}}{\lvert x(k)x(\ell) \rvert} + \mathrm{e}^{\mathrm{i} \alpha} \frac{y(\ell)}{\lvert y(\ell) \rvert}\left( \frac{x(k)\overline{x(\ell)}}{\lvert x(k)x(\ell) \rvert} - \frac{y(k)\overline{y(\ell)}}{\lvert y(k)y(\ell) \rvert} \right) \right\rvert \\
			&\leq \left\lvert \frac{x(\ell)}{\lvert x(\ell) \rvert} - \mathrm{e}^{\mathrm{i} \alpha} \frac{y(\ell)}{\lvert y(\ell) \rvert} \right\rvert + \left\lvert \frac{x(k)\overline{x(\ell)}}{\lvert x(k)x(\ell) \rvert} - \frac{y(k)\overline{y(\ell)}}{\lvert y(k)y(\ell) \rvert} \right\rvert.
		\end{align*}
		The claim follows by noting that using the triangle inequality and the reverse triangle
		inequality, one has
		\[
			\left\lvert \frac{z_0}{\lvert z_0 \rvert} - \frac{z_1}{\lvert z_1 \rvert} \right\rvert
			= \frac{\big\lvert z_0 \lvert z_1 \rvert - \lvert z_0 \rvert z_1 \big\rvert}{\lvert z_0z_1 \rvert}
			\leq \frac{
				\lvert z_0 - z_1 \rvert + \big\lvert \lvert z_0 \rvert-\lvert z_1 \rvert \big\rvert
			}{
				\lvert z_0 \rvert
			} \leq \frac{2\lvert z_0 - z_1 \rvert}{\lvert z_0 \rvert},
		\]
		for $z_0,z_1 \in \mathbb{C}$.
	\end{proof}

	\section{Proofs of the most fundamental formulae}\label{app:proofs_conseq_formulae}
	
	For convenience of the reader, we present the proofs of the two formulae presented in
	section \ref{sec:prerequisites}. We note that these formulae are well-known in the
	literature and have repeatedly been used to prove uniqueness results in recent years
	\cite{Bojarovska15, Eldar15, Li17, Nawab83}. We start with lemma \ref{lem:autocorrelation}.
	
	\begin{proof}[Proof of lemma \ref{lem:autocorrelation}]
		For $m,n \in \{0,\dots,L-1\}$, one can calculate
		\begin{align*}
			M_\phi[x](m,n) &= \left\lvert \mathcal{F}[x_m](n) \right\rvert^2 
			= \mathcal{F}[x_m](n) \cdot \overline{\mathcal{F}[x_m](n)} = \mathcal{F}[x_m](n) \cdot \mathcal{F}[x_m^\#](n) \\
			&= \frac{1}{\sqrt{L}} \cdot \mathcal{F}[x_m \ast x_m^\#](n),
		\end{align*}
		using the convolution theorem for the DFT. Applying the inverse Fourier transform 
		yields the statement.
	\end{proof}
	
	\begin{proof}[Proof of lemma \ref{lem:ambguity}]
		According to lemma \ref{lem:autocorrelation}, we have
		\[
			\mathcal{F}^{-1}\left[ M_\phi[x](m,\cdot) \right](n)
			= \frac{1}{\sqrt{L}} \cdot \sum_{\ell=0}^{L-1} 
				x(\ell)\overline{x(\ell-n)} \phi(\ell-n-m) \overline{\phi(\ell-m)},
		\]
		for $m,n \in \{0,\dots,L-1\}$, and therefore
		\[
			\mathcal{F}\left[ M_\phi[x](m,\cdot) \right](n) 
			= \frac{1}{\sqrt{L}} \cdot \sum_{\ell=0}^{L-1} 
				x(\ell)\overline{x(\ell+n)} \phi(\ell+n-m) \overline{\phi(\ell-m)}.
		\]
		Taking the DFT in $m$ yields
		\begin{align*}
			\mathcal{F}\left[ M_\phi[x]\right](m,n) 
			&= \frac{1}{L} \cdot \sum_{\ell,k=0}^{L-1} 
				x(\ell)\overline{x(\ell+n)} \phi(\ell+n-k) \overline{\phi(\ell-k)} 
					\mathrm{e}^{-2\pi\mathrm{i}\frac{k m}{L}} \\
			&= \frac{1}{L} \cdot \sum_{\ell=0}^{L-1}
				x(\ell)\overline{x(\ell+n)} \mathrm{e}^{-2\pi\mathrm{i}\frac{\ell m}{L}}
				\sum_{k=0}^{L-1} \phi(\ell+n-k) \overline{\phi(\ell-k)}
					\mathrm{e}^{2\pi\mathrm{i}\frac{(\ell-k) m}{L}} \\
			&= \mathcal{A}[x](-n,m) \overline{\mathcal{A}[\phi](-n,m)}.
		\end{align*}
	\end{proof}

	\begin{proof}[Proof of proposition \ref{prop:bandlimited_amb}]
		Let us use the notation $x'_m(\ell) = \overline{x(\ell-m)}$, for $\ell,m
		\in \{0,\dots,L-1\}$, and denote the entry-wise product of two vectors via $\odot$.
		For $m,n \in \{0,\dots,L-1\}$ we apply the convolution theorem for the DFT to 
		see that 
		\[
			\mathcal{A}[x](m,n) = \mathcal{F}[x \odot x'_m](n)
				= \frac{1}{\sqrt{L}} \left( \mathcal{F}[x] \ast \mathcal{F}[x'_m] \right)(n).
		\]
		Next, we compute 
		\[
			\mathcal{F}[x'_m](k) = \frac{1}{\sqrt{L}} \sum_{\ell = 0}^{L-1}
				\overline{x(\ell - m)} \mathrm{e}^{-2\pi\mathrm{i} \frac{\ell k}{L}}
			= \frac{\mathrm{e}^{-2\pi\mathrm{i} \frac{m k}{L}}}{\sqrt{L}} \sum_{\ell = 0}^{L-1}
				\overline{x(\ell)} \mathrm{e}^{-2\pi\mathrm{i} \frac{\ell k}{L}}
			= \mathrm{e}^{-2\pi\mathrm{i} \frac{m k}{L}} \mathcal{F}[x]^\#(k),
		\]
		for $k \in \{0,\dots,L-1\}$. Therefore, 
		\begin{align*}
			\mathcal{A}[x](m,n)
			&= \frac{1}{\sqrt{L}} \left( \mathcal{F}[x] \ast \mathcal{F}[x'_m] \right)(n)
			= \frac{1}{\sqrt{L}} \sum_{k = 0}^{L-1} \mathcal{F}[x](k) \overline{\mathcal{F}[x](k-n)}
				\mathrm{e}^{2\pi\mathrm{i} \frac{m (k-n)}{L}} \\
			&= \frac{1}{\sqrt{L}} \sum_{k = -B}^B \mathcal{F}[x](k) \overline{\mathcal{F}[x](k-n)}
				\mathrm{e}^{2\pi\mathrm{i} \frac{m (k-n)}{L}}.
		\end{align*}
		Let us assume that $4B < L$ and consider $k \in \{-B,\dots,B\}$. First, suppose that
		$n \in (-\tfrac{L}{2},-2B) \cap \mathbb{Z}$. Then, it follows that 
		\[ 
			k - n \in [-B-n,B-n] \cap \mathbb{Z} \subset \left(B,\frac{L}{2} + B \right) \cap \mathbb{Z}
		\]
		and since $L-B \geq \tfrac{L}{2} + B$, it follows that $\mathcal{F}[x](k-n) = 0$. Therefore, 
		\[
			\mathcal{A}[x](m,n) = 0.
		\]
		Secondly, suppose that $n \in (2B,\tfrac{L}{2}] \cap \mathbb{Z}$. Then, one has
		\[ 
			k - n \in [-B-n,B-n] \cap \mathbb{Z} \subset \left[-\frac{L}{2} - B,- B \right) \cap \mathbb{Z}
		\]
		and $\tfrac{L}{2} - B > B$ implies $\mathcal{F}[x](k-n) = 0$. Thus, 
		\[
			\mathcal{A}[x](m,n) = 0.
		\] 
		What remains is the case $4B \geq L$: Note that by $\{-2B,\dots,2B\}
		= \{0,\dots,L-1\} \mod L$ the proposition is trivial in this case.
	\end{proof}

	%------------------------------ Bibliography -------------------------------
\bibliographystyle{plain}
\bibliography{sources}
\end{document}